\def\jan#1{{\textcolor{teal} {#1}}}
\newtheorem{theorem}{Theorem}[section]
\newtheorem{definition}[theorem]{Definition}
\newtheorem{lemma}[theorem]{Lemma}
\theoremstyle{remark}
\newtheorem{remark}[theorem]{Remark}
\DeclareMathOperator{\dive}{div}
\newcommand{\stinf}{Q_\infty}
\newcommand{\stt}{Q_t}
\numberwithin{equation}{section}
\title[Relative entropy for Euler--Navier--Stokes--Korteweg models]{Extending relative entropy for Korteweg-type models with non-monotone pressure: large friction limit and weak-strong uniqueness} 
\author[G. Cianfarani Carnevale]{Giada Cianfarani Carnevale}
\address[Giada Cianfarani Carnevale]{Dipartimento di Ingegneria e Scienze dell'Informazione e Matematica, Universit\`a degli Studi dell'Aquila (Italy)}
\email{giada.cianfaranicarnevale@univaq.it}
\author[J. Giesselmann]{Jan Giesselmann}
\address[Jan Giesselmann]{Technische Universität Darmstadt, Department of Mathematics (Germany)}
\email{giesselmann@mathematik.tu-darmstadt.de}
\keywords{Euler----Korteweg model, Navier--Stokes--Korteweg model, diffusive relaxation, relative entropy, weak-strong uniqueness}
\subjclass[2010]{
35B35, 
35L45 
35Q35 
}
\begin{document}

\begin{abstract}
In this paper we study weak-strong uniqueness  and singular relaxation limits for the Euler--Korteweg and Navier--Stokes--Korteweg systems with non monotone pressure. Both weak-strong uniqueness and the relaxation limit are investigated using relative entropy technique. We make use of the enlarged formulation of the model in terms of the \emph{drift velocity}  introduced in \cite{Bresh}, generalizing in this way results proved in \cite{LT2} for the Euler--Korteweg model, by allowing more general capillarity functions, and the result contained in \cite{CCL} for the monotone pressure case.

\end{abstract}


\maketitle

\section{Introduction}
\label{sec:intro}

The objective of this paper is to study stability properties of Euler--Korteweg and Navier--Stokes--Korteweg systems with non monotone pressure. In particular, we address weak-strong uniqueness and the high friction limit.
Indeed, we will leverage ideas from \cite{CCL}, which treated monotone pressure laws, in order to extend results from \cite{JT2017} to more general capillarity laws.

The Navier--Stokes--Korteweg (NSK) system  reads
\begin{equation}\label{ek}
	\left\{\begin{aligned}
		& \partial_t \rho + \dive m =0\\
		&  \partial_t m + \dive \left( \frac{m \otimes m}{\rho}\right) + \nabla p(\rho)  =
		 {2 \nu} \dive(\mu_L(\rho)Du) + \nu \nabla(\lambda_L(\rho) \dive u)
		\\
	& \qquad\qquad\qquad	+ \rho \nabla \left( k(\rho)\Delta\rho + \frac{1}{2}k'(\rho)|\nabla \rho|^2 \right) - \xi \rho u,
		\end{aligned}\right.
\end{equation}
where $t>0$, $x\in\mathbb{T}^n$, the $n$--dimensional torus, $\rho$ is the density, $m=\rho u $ is the momentum,  and the constants $\xi > 0$ and $\nu\geq 0$ are the friction and the viscosity coefficient respectively. The Euler--Korteweg (EK) system is obtained by setting $\nu=0$ in \eqref{ek}.
In the viscosity terms of \eqref{ek}
$$Du= \frac{ \nabla u + {}^t\nabla u}{2}$$ 
is the symmetric part of the gradient $\nabla u$ and the Lam\'e coefficients $\mu_L(\rho)$ and $\lambda_L(\rho)$ satisfy the dissipation  relation:
\begin{equation}\label{lame}
    \mu_L(\rho)\geq 0;\ \frac2n \mu_L(\rho) +\lambda_L(\rho) \geq 0.
\end{equation}
By $p$ and $k$ we denote constitutive laws for pressure and capillarity.
The pressure $p(\rho)$ is connected to 
the enthalpy $h(\rho)$ by the following relations:
\begin{equation}\label{eq:defh}
 p(\rho)=\rho h'(\rho) - h(\rho)
 \end{equation}
 which implies
 \begin{equation}\label{eq:defh2}
 h''(\rho) = \frac{p'(\rho)}{\rho}; 
 \quad \rho \nabla (h'(\rho)) = \rho h''(\rho) \nabla \rho = \nabla p(\rho).   
\end{equation}

We investigate the case of non monotone pressure,  
which is crucial for treating multi-phase flows. For simplicity we assume that the pressure can be split into a power law part and a compactly supported non-convex part, i.e.,  $h(\rho)= \rho^{\gamma}+ e(\rho)$ with $\gamma >1 $ and $e(\rho) \in C_c^{\infty}(0,\infty)$. In view of  the relation $p(\rho)= \rho h' (\rho)-h (\rho)$ in \eqref{eq:defh2} the pressure term satisfies:
\begin{equation}\label{eqp}
p(\rho)= (\gamma-1)\rho^{\gamma}+\rho e'(\rho)- e(\rho)=: p_{\gamma}+p_e.
\end{equation}

We will assume that the capillarity law satisfies 
\begin{equation}
    k(\rho) = \frac{(s+3)^2}{4} \rho^s \quad \text{with } -1 \leq s \leq 2\gamma -3.
\end{equation}
We will prove weak-strong uniqueness in the full parameter range
\begin{equation}
    -1 \leq s \leq 2\gamma -3
\end{equation}
and the large friction limit in the smaller parameter range 
\begin{equation}
    -1 \leq s \leq \gamma -2 .
\end{equation}


These ranges for $s$ should be contrasted with the requirement that $k$ be constant in \cite{JT2017}. In particular, our new condition covers the quantum hydrodynamics case $k(\rho)= \rho^{-1}$.

The entropy of system \eqref{ek}, see \cite{GLT,CCL,JT2017,Bresh}, is defined as:
\begin{equation*}
    \eta (\rho,m,\nabla \rho):= \frac{1}{2}  \frac{|m|^2}{\rho} + h(\rho) + \frac{1}{2}k(\rho) |\nabla \rho|^2
\end{equation*}
that is the sum of the contribution of the kinetic energy, the internal energy ($h(\rho)$) and the potential energy respectively.


The diffusive (large friction and low Mach) limit, after a suitable rescaling of time, in both Navier--Stokes--Korteweg and Euler--Korteweg equations is given by:

\begin{equation}\label{eq:diff-limit}
\rho_t = \dive_x \left( \rho \nabla_x \left(h'(\rho) +  k(\rho)\Delta\rho + \frac{1}{2}k'(\rho)|\nabla \rho|^2 \right) \right).
\end{equation} 
It can be obtained by performing the classical Hilbert expansion, see Section \ref{sec:hilbert}.
Starting from the results contained in \cite{CCL} we consider here only the dissipative model, namely the Navier-Stokes-Korteweg one, with the non-convex pressure as defined in \eqref{eqp}.
It is important to observe that we will assume only the dissipative structure and uniform integrability for the Lam\'e coefficients, without any particular connection with the capillarity terms.\\

The fact that we can allow for a larger range of values $s, \gamma$ than in previous works, in particular \cite{JT2017}, is due the the use of an  
\textit{augmented formulation} of the NSK system \eqref{ek}, that has been used in \cite{Bresh, CCL}. In particular, we need a less stringent assumption on the capillarity coefficient $k(\rho)$ than the one presented in \cite[Lemma 2]{GLT}.
Moreover, in \cite{JT2017} the authors show the weak-strong uniqueness result for Euler-Korteweg with non-monotone pressure only for constant capillarity coefficient. 

Let us introduce the \emph{drift velocity}:
\begin{equation*}
v:= \frac{\nabla \mu(\rho)}{\rho},
\end{equation*}
where $\mu(\rho)$ is related to the capillarity function via $\mu'(\rho) = \sqrt{\rho k(\rho)}$. This allows us to rewrite  \eqref{ek} in augmented form:
\begin{equation}\label{ekb}
	\left\{\begin{aligned}
		& \partial_t \rho + \dive (\rho u) =0\\
		&  \partial_t( \rho u ) + \dive( \rho u\otimes u) + \nabla p( \rho)   = {2 \nu} \dive(\mu_L(\rho)Du) + \nu \nabla(\lambda_L(\rho) \dive u) \\  & \qquad\qquad\qquad +\dive(\mu(\rho)\nabla v) + \frac{1}{2}\nabla(\lambda(\rho)\dive v) - \xi \rho u\\
		& \partial_t (\rho v) + \dive(\rho v \otimes u) + \dive(\mu(\rho)^t\nabla u) + \frac{1}{2}\nabla (\lambda(\rho)\dive u)=0,
		\end{aligned}\right.
\end{equation}
 where
 \begin{align}\label{BDREL}
     \lambda(\rho):= 2(\mu'(\rho)\rho - \mu(\rho)).
 \end{align}
 Now using the Bohm identity (see \cite{Bresh}) we can define a stress tensor $S_1$ in $\eqref{ekb}$ depending only on the capillarity part:
 \begin{equation*}
 \dive(\mu(\rho)\nabla v) + \frac{1}{2}\nabla(\lambda(\rho)\dive v) = \dive S_1
 \end{equation*}
and a new \emph{momentum} $J:=\rho v$ such that we can rewrite the entropy as
\[ \bar \eta(\rho,m,J )= \frac12 \frac{|m|^2}{\rho} + h(\rho) +  \frac12 \frac{|J|^2}{\rho}
\]
and define  a new relative entropy with respect to $(\rho, m, J)$:
\begin{equation}\label{def:rhomJ}
\aligned
\eta(\rho,m,J | \bar{\rho},\bar{m},\bar{J}) &: = \eta(\rho,m,J) - \eta(\bar{\rho},\bar{m},\bar{J}) - \bar{\eta}_{\rho}(\rho- \bar{\rho})- \bar{\eta}_m \cdot (m-\bar{m}) \\
& \  - \bar{\eta}_J \cdot(J-\bar{J}) \\
& = \frac{1}{2} \rho |u - \bar{u}|^2 + \frac{1}{2} \rho |v-\bar{v}|^2 + h_{\gamma}(\rho|\bar{\rho}) + h_{e}(\rho|\bar{\rho}).
\endaligned
\end{equation}
In the forthcoming relative entropy estimates we control part of the previous relation related to non-convexity $h_{e}(\rho|\bar{\rho})$ in terms of the relative energy itself and some positive error terms. We obtain a stability estimate for the convex part of $\eta(\rho,m,J | \bar{\rho},\bar{m},\bar{J})$.

The outline of this work is as follows. In Section \ref{sec:hilbert}, after the appropriate time scaling,  we perform the Hilbert expansion of \eqref{ekb} to identify the limit equation. Then  we rewrite the latter as a correction of the relaxation system \eqref{ekb} in order to perform the relative entropy calculations. Section \ref{sec:relenes} is devoted to obtaining the relative entropy inequality, which will be used to prove a stability estimate between the two solutions in the relaxation limit. 
Finally, in Section \ref{sec:west} we address the issue of weak-strong uniqueness.

\section{Hilbert expansion and formal diffusive limit for the Navier--Stokes--Korteweg model}
\label{sec:hilbert}
In the present part, we briefly recall the correct scaling for which the diffusive limit of \eqref{ekb} can be recovered. More precisely, under the assumption of \eqref{eqp}, for $\xi = 1/\epsilon$, we rescale the time such that  $\partial_t \rightarrow \epsilon\partial_t$ and accordingly, the system \eqref{ekb} reads
\begin{equation}\label{ekb-scaled}
	\left\{\begin{aligned}
		& \partial_t \rho + \frac{1}{\epsilon} \dive (m) =0\\
		&  \partial_t( m) + \frac{1}{\epsilon} \dive \left( \frac{m \otimes m}{\rho}\right) + \frac{1}{\epsilon} \nabla p( \rho)   =
  \frac{1}{\epsilon} \dive{T_\nu}
        +\frac{1}{\epsilon}\dive S_1 - \frac{1}{\epsilon^2} \rho u\\
		& \partial_t (J) + \frac{1}{\epsilon} \dive \left( \frac{  J \otimes m}{\rho}   \right) + \dive S_2=0,
		\end{aligned}\right.
\end{equation}
where $J=\rho v$ and   (see \cite{Bresh} for further details)
\begin{align*}
\dive  S_2 & = \dive(\mu(\rho)^t\nabla u) + \frac{1}{2}\nabla (\lambda(\rho)\dive u)\\
\dive T_{\nu} & = 2 \nu \dive(\mu_L(\rho)Du) + \nu \nabla(\lambda_L(\rho) \dive u)
\end{align*}
We introduce the asymptotic expansions of $\rho$ and $m$  in \eqref{ekb-scaled}, the one for $J$ follows from $J = \rho v = \nabla \mu(\rho)$:
\begin{align*}
 &\rho = \rho_0 + \epsilon\rho_1 + \epsilon^2 \rho_2 + \cdots\\
  &m = m_{0} + \epsilon m_{1} + \epsilon^2 m_{2} + \cdots.
  \end{align*}
Then, from the mass conservation we get:
\begin{align*}
&O(\epsilon^{-1}):  & & \dive m_{0} = 0; \\ 
&O(1): & & \partial_t \rho_0 + \dive m_{1} = 0; \\
&O(\epsilon): & &  \partial_t \rho_1 + \dive m_{2} =0;\\
&O(\epsilon^2): & &  \dots\\
\end{align*}
while, from the momentum equation we get:
\begin{align*}
&O(\epsilon^{-2}) :  & &    m_{0}=0; \\ 
&O(\epsilon^{-1}) :  & &    -m_{1} = \nabla p(\rho_0) - \dive S_1(\rho_0); 
\\
&O(1): & &  -m_{2} = \nabla( p'(\rho_0)\rho_1) - \dive(\mu'(\rho_0)\rho_1 \nabla v_0 + \mu(\rho)\nabla v_1) 
\\
& & &   \qquad \qquad + \nabla(\lambda'(\rho_0)\rho_1 \dive v_0 + \lambda(\rho_0) \dive v_1) 
\\
& & & \qquad\qquad - 2 \nu\dive\left(\mu_L(\rho_0)D\left(\frac{m_1}{\rho_0}\right)\right) - \nu \nabla\left(\lambda_L(\rho)\dive \frac{m_1}{\rho_0}\right); \\
&O(\epsilon): & &  \dots\\
\end{align*}

we recover the equilibrium relation $m_{0} = 0$, the Darcy's law $m_{1} = - \nabla_x p(\rho_0) + \dive_x S_1(\rho_0)$ and the gradient flow equation for $\rho_0$, that is the formal limit as $\epsilon \rightarrow 0$ of \eqref{ekb-scaled}:
\begin{equation}\label{gf}
\partial_t\rho_{0} + \dive \left( - \nabla p(\rho_0) + \dive S_1(\rho_0) \right)= 0.
\end{equation}

We use the same strategy as in \cite{LT, LT2,CCL}, i.e. we interpret the latter as the solution of a perturbed Euler-Korteweg system with friction with a perturbation $\bar e$ whose size we can quantify in $\varepsilon$. Let 
$\bar{\rho}$ denote a smooth solution of $\eqref{gf}$. Then $(\bar{\rho}, \bar{m} = \bar{\rho}\bar{u}, \bar{J}= \bar \rho \bar v)$ 
\begin{equation}\label{ekb-scaledstrong}
	\left\{\begin{aligned}
		& \partial_t \bar{\rho} + \frac{1}{\epsilon} \dive \bar{m} =0\\
		&  \partial_t\bar{ m} + \frac{1}{\epsilon} \dive \left( \frac{\bar{m} \otimes \bar{m}}{\bar{\rho}}\right) + \frac{1}{\epsilon} \nabla p(\bar{ \rho})   = \frac{1}{\epsilon}\dive \bar{S_1} - \frac{1}{\epsilon^2} \bar{m} + e(\bar{\rho},\bar{m})\\
		& \partial_t \bar{J} + \frac{1}{\epsilon} \dive \left( \frac{ \bar{J} \otimes \bar{m}}{\bar{\rho}}   \right) + \frac{1}{\epsilon} \dive \bar{S_2}=0.
		\end{aligned}\right.
\end{equation}
where
\begin{equation}\label{eq:defbarutheo}
\bar{m} =  \epsilon \left(- \nabla p(\bar{\rho})+   \dive S_1(\bar{\rho})\right) \qquad  \qquad \bar{J}= \nabla \mu(\bar\rho)
\end{equation}
Clearly, in \eqref{ekb-scaledstrong}, the error term  $e(\bar{\rho},\bar{m})= \bar{e}$ is given by:
\begin{align}\label{eq:error}
 \bar{e} & = \frac{1}{\epsilon} \dive_x \left( \frac{\bar{m} \otimes \bar{m} }{\bar{\rho}} \right) + \bar{m}_t \nonumber\\ 
 & = \epsilon \dive_x \left( \jan{\frac{1}{\bar \rho}} (- \nabla p(\bar{\rho})+   \dive S_1(\bar{\rho})) \otimes  (- \nabla p(\bar{\rho})+   \dive S_1(\bar{\rho}) \right) 
 \nonumber\\ 
 & \ +  \epsilon \left( - \nabla p(\bar{\rho})+   \dive S_1(\bar{\rho}) \right)_t 
 \nonumber\\ 
 & = O(\epsilon).
\end{align}
In view of the previous discussion we notice that the non convexity of the pressure is preserved in the diffusive limit.

\section{Relative entropy estimate for the Navier--Stokes--Korteweg model}
\label{sec:relenes}

Let us start by recalling the  entropy--entropy flux pair $(\eta, Q)$ associated to the original Navier-Stokes-Korteweg system \eqref{ek} with $\xi = 1/\epsilon$ and after the related time scaling. We use the notation of \cite{Bresh, CCL} that we recalled in chapter 1: the \emph{drift velocity}  $v = \sqrt{\frac{k(\rho)}{\rho}} \nabla \rho$, 
 the drift momentum $J=\rho v$, and the enthalpy $h(\rho)= \rho^{\gamma}+ e(\rho)$. 
The entropy flux $Q$ is given by:
$$Q(\rho, m, J) = \frac{1}{2} m   \frac{|m|^2}{\rho^2} + m h'(\rho) + \frac{1}{2} m   \frac{|J|^2}{\rho^2} - 2 \nu \mu_L(\rho) Du \cdot \frac{m}{\rho} - \nu \lambda_L(\rho) \dive u \cdot \frac{m}{\rho}.$$
A formal computation gives us:
\begin{multline}\label{single eta}
\partial_t \eta(\rho,m,J) + \frac{1}{\epsilon} \dive Q(\rho,m,J)  = \frac{1}{\epsilon} \frac{m}{\rho} \cdot \dive S_1 - \frac{1}{\epsilon} \frac{J}{\rho} \cdot \dive S_2 - \frac{2 \nu }{\epsilon} \mu_L(\rho) |Du|^2 \\
 - \frac{ \nu }{\epsilon} \lambda_L(\rho) |\dive u|^2  - \frac{1}{\epsilon^2} \frac{|m|^2}{\rho},
\end{multline}
while for the regular solution \eqref{ekb-scaledstrong} we get:
\begin{align}\label{single etabar}
\partial_t {\eta}(\bar \rho,\bar m, \bar J) + \frac{1}{\epsilon} \dive {Q}(\bar \rho,\bar m, \bar J)&  = \frac{1}{\epsilon} \frac{\bar{m}}{\bar{\rho}} \cdot \dive \bar{S_1} - \frac{1}{\epsilon} \frac{\bar{J}}{\bar{\rho}} \cdot \dive \bar{S_2} - \frac{1}{\epsilon^2} \frac{|\bar{m}|^2}{\bar{\rho}}+ \bar{e} \cdot \frac{\bar{m}}{\bar{\rho}}.
\end{align}
where there is no explicit viscous dissipation since it is treated as a solution of perturbed Euler-Korteweg system.

Recall the relative entropy in the new state variables $(\rho,m,J)$, then,
by assuming that both solutions are regular we get:
\begin{align}\label{formalcomp}
& \partial_t \eta (\rho,m,J| \bar{\rho}, \bar{m}, \bar{J}) + \frac{1}{\epsilon} \dive_x Q(\rho,m,J|\bar{\rho}, \bar{m}, \bar{J}   ) = \\ & - \frac{1}{\epsilon} \rho \nabla \bar{u} : (u-\bar{u}) \otimes (u-\bar{u}) - \frac{1}{\epsilon^2} \rho |u-\bar{u}|^2 - \frac{\rho}{\bar{\rho}} \bar{e}\cdot(u-\bar{u}) - \frac{1}{\epsilon} p(\rho| \bar{\rho})\dive \bar{u} \\ & - \frac{1}{\epsilon} \rho \; \nabla \bar{u} : (v-\bar{v}) \otimes (v-\bar{v})   \\
& \ - \frac{1}{\epsilon}\rho( \mu''(\rho) \nabla \rho - \mu''(\bar{\rho}) \nabla \bar{\rho})) \cdot ((v - \bar{v}) \dive \bar{u}- (u - \bar{u}) \dive \bar{v}) \\ & - \frac{1}{\epsilon} \rho(\mu'(\rho) - \mu'(\bar{\rho}))((v-\bar{v})) \cdot \nabla (\dive\bar{u}) - (u - \bar{u}) \cdot \nabla (\dive \bar{v}))\\
& \ - \frac{2\nu}{\epsilon} \mu_L(\rho)|D(u-\bar u)|^2  - \frac{\nu}{\epsilon}\lambda_L(\rho)|\dive(u-\bar u )|^2 \\
& \ - \frac{2\nu}{\epsilon} \mu_L(\rho) D(\bar u):D(u-\bar u)  - \frac{\nu}{\epsilon} \lambda_L(\rho)\dive \bar u(\dive u - \dive \bar u)
\end{align}
For the formal computation of the relative entropy estimates see \cite{CCL,Bresh}. Now we state the definition of finite energy weak solution for the Navier-Stokes-Korteweg system following the one contained in \cite{LT2,CCL}.
We recall that $p(\rho)= p_{\gamma}+ p_e$, while the capillarity coefficient $k(\rho)$ is given by $k(\rho) = \frac{(s+3)^2}{4} \rho^s$,
for which we obtain $\mu(\rho) = \rho^{\frac{s+3}{2}}$. In the sequel we use the abbreviation $Q_\infty :=(0,+\infty)\times\mathbb{T}^n$ for the space time cylinder.

\begin{definition}\label{deFNS}
A tuple ($\rho$, $m$, $J$) with $\rho \in C([0, \infty);(L^1(\mathbb{T}^n))$, $(m,J) \in C([0, \infty);(L^1(\mathbb{T}^n))^{2n})$, $\rho \geq 0$, is a weak (periodic) solution of $\eqref{ekb-scaled}$ if 
\begin{align*}
    & \sqrt{\rho}u,  \sqrt{\rho}v \in L^{\infty}((0,T);L^2(\mathbb{T}^n)^n),\ \rho \in C([0, \infty);(L^\gamma(\mathbb{T}^n)),
    \\
    & \mu_L(\rho)D(u) \in L^1((0,T);L^1(\mathbb{T}^n)^{2n} ),
    \ \lambda_L(\rho)\dive u \in L^1((0,T);L^1(\mathbb{T}^n))
\end{align*}
and $(\rho, m,J)$ satisfy for all $\psi \in C^1_c([0, \infty); C^1(\mathbb{T}^n))$ and for all $\phi, \varphi \in C^1_c([0, \infty); C^1(\mathbb{T}^n)^n)$: 
\begin{align*}
& - \iint_{\stinf}\Bigg (\rho \psi_t + \frac{1}{\epsilon} m \cdot \nabla_x \psi \Bigg )dxdt = \int_{\mathbb{T}^n} \rho(x,0)\psi(x,0);
\end{align*}
\begin{align*}
&  -  \iint_{\stinf} \Bigg[m \cdot (\phi)_t + \frac{1}{\epsilon}\left(\frac{m \otimes m}{\rho} : \nabla_x \phi \right) + \frac{1}{\epsilon} p(\rho) \dive \phi  -   \frac{2\nu}{\epsilon} \mu_L(\rho) D(u): \nabla \phi \\ 
& \; \; -  \frac{\nu}{\epsilon} \lambda_L(\rho)\dive u \dive \phi 
+ \frac{1}{\epsilon}\left( \mu(\rho) v \cdot \nabla \dive (\phi) + \nabla \mu(\rho) \cdot (\nabla \phi v ) \right) + \\
& \; \; \; \frac{1}{\epsilon} \left( \frac{1}{2} \nabla \lambda(\rho) \cdot v \dive \phi + \frac{1}{2} \lambda(\rho) v \cdot \nabla \dive \phi \right)  \Bigg]dxdt    \\
&= \ - \frac{1}{\epsilon^2} \iint_{\stinf}m \cdot \phi dxdt + \int_{\mathbb{T}^n} m(x,0) \cdot\phi(x,0)dx,
\end{align*}
where we have used the identity 
$$\displaystyle{S= - p(\rho) \mathbb{I} + S_1= - p(\rho)\mathbb{I} + \mu(\rho)\nabla v + \frac{1}{2}\lambda(\rho)\dive v \mathbb{I}},$$ 
and
\begin{align*}
& - \iint_{\stinf} \Bigg[J \cdot\varphi_t + \frac{1}{\epsilon}\left(\frac{J \otimes m}{\rho} : \nabla_x \varphi \right) - \frac{1}{\epsilon}\Bigg( \mu(\rho) u \cdot   (\nabla \dive \varphi ) + \nabla \mu(\rho) \cdot (\nabla \varphi u ) \\
&\ + \frac{1}{2} \nabla \lambda(\rho) \cdot u \dive \varphi + \frac{1}{2}  \lambda(\rho) u \cdot  \nabla \dive \varphi \Bigg)  \Bigg]dxdt =  \int_{\mathbb{T}^n} J(x,0) \cdot \varphi(x,0)dx,
\end{align*}
where we have used the identity 
$$\displaystyle{S_2= \mu(\rho)^t\nabla u + \frac{1}{2}\lambda(\rho)\dive u \mathbb{I}}.$$
If in addition $ \eta (\rho,m,J) \in C([0, \infty); L^1(\mathbb{T}^n))$ and $(\rho,m,J)$ satisfy
\begin{align}\label{dissNS}
& \iint_{\stinf} \left( \eta(\rho,m,J) \right) \dot{\theta}(t) dxdt \leq  \int_{\mathbb{T}^n} \left( \eta(\rho,m,J)\right)|_{t=0} \theta(0)dx \nonumber\\
& \ - \frac{1}{\epsilon^2}  \iint_{\stinf} \frac{|m|^2}{\rho} \theta(t) dxdt - \frac{1}{\epsilon}\iint_{\stinf} \mu_L(\rho)|D(u)|^2\theta(t) dxdt \nonumber \\
& \ - \frac{1}{\epsilon}\iint_{\stinf} \lambda_L(\rho)|\dive u|^2 \theta(t)dxdt
\end{align}
for any non-negative $\theta \in W^{1,\infty}[0, \infty)$ compactly supported on $[0,\infty)$, then $(\rho,m,J)$ is called a \emph{dissipative} weak solution.

If $\eta(\rho,m,J) \in C([0,\infty);L^1(\mathbb{T}^n))$ and $(\rho,m,J)$ satisfy $\eqref{dissNS}$ as an equality, then $(\rho,m,J)$ is called a \emph{conservative} weak solution.

We say that a dissipative (or conservative) weak (periodic) solution $(\rho,m,J)$ of $\eqref{ekb}$ with $\rho \geq 0$ has finite total mass and energy if
$$ \sup_{t \in (0,T)} \int_{\mathbb{T}^n} \rho dx \leq M <+ \infty,$$
and
$$ \sup_{t \in (0,T)} \int_{\mathbb{T}^n}  \eta(\rho,m,J) dx \leq E_o <+ \infty.$$
\end{definition}

In the sequel we use for any $t>0$ the abbreviation $Q_t=(0,t) \times \mathbb{T}^n$.

\begin{theorem}\label{relativeentropyNS}
Let $(\rho,m,J)$ be a dissipative (or conservative) weak solution of $\eqref{ekb-scaled}$ with finite total mass and energy according to Definition \ref{deFNS}, and $\bar \rho$ a smooth solution of $\eqref{ekb-scaledstrong}$. Then
\begin{align}\label{eq:relenNS}
&\int_{\mathbb{T}^n} \eta(\rho,m,J| \bar{\rho}, \bar{m}, \bar{J})(t)dx  \leq  \int_{\mathbb{T}^n} \eta(\rho,m,J| \bar{\rho}, \bar{m}, \bar{J})(0)dx \nonumber \\
&  - \frac{2\nu}{\epsilon}\iint_{ \stt } \mu_L(\rho)|D(u-\bar u)|^2 dxd\tau - \frac{\nu}{\epsilon}\iint_{ \stt }\lambda_L(\rho)|\dive(u-\bar u )|^2dxd\tau  \nonumber\\
& - \frac{1}{\epsilon^2} \iint_{\stt}  \rho |u-\bar{u}|^2 dxd\tau - \frac{1}{\epsilon} \iint_{\stt} \rho \nabla \bar{u}: (u-\bar{u}) \otimes (u-\bar{u})dxdt \nonumber\\ 
& - \iint_{\stt} e(\bar{\rho},\bar{m}) \cdot \frac{\rho}{\bar{\rho}} (u-\bar{u})dxd\tau - \frac{1}{\epsilon} \iint_{\stt}p(\rho|\bar{\rho}) \dive \bar{u} dxd\tau \nonumber \\ 
& - \frac{1}{\epsilon} \iint_{\stt} \rho \; \nabla \bar{u}: (v-\bar{v}) \otimes (v-\bar{v}) dxd\tau \nonumber\\ 
&  - \frac{2\nu}{\epsilon}\iint_{ \stt } \mu_L(\rho) D(\bar u):D(u-\bar u) dxd\tau - \frac{\nu}{\epsilon}\iint_{\stt }\lambda_L(\rho)\dive \bar u(\dive u - \dive \bar u)dxd\tau \nonumber \\
& - \frac{1}{\epsilon} \iint_{\stt} \rho[(\mu''(\rho)\nabla \rho - \mu''(\bar{\rho})\nabla \bar{\rho})\cdot((v-\bar{v})\dive \bar{u} - (u-\bar{u})\dive \bar{v})]dxd\tau \nonumber\\ 
& - \frac{1}{\epsilon}\iint_{\stt} \rho (\mu'(\rho)- \mu'(\bar{\rho}))[(v-\bar{v})\cdot\nabla \dive \bar{u}-(u- \bar{u})\cdot\nabla \dive \bar{v}]dxd\tau,
\end{align}
where 
\begin{equation*}\bar{m} = \bar{\rho}\bar{u}= \epsilon \left(- \nabla p(\bar{\rho})+   \dive S_1(\bar{\rho})\right); \ \bar{J}= \bar{\rho}\bar{v} = \nabla \mu(\bar{\rho}).
\end{equation*}
\end{theorem}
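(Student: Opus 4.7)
The proof is the standard relative-entropy argument in the state variables $(\rho,m,J)$, adapted to the augmented NSK formulation and made rigorous via the test-function framework of Definition \ref{deFNS}. Writing
$$\int_{\mathbb{T}^n}\eta(\rho,m,J|\bar\rho,\bar m,\bar J)\,dx = \int_{\mathbb{T}^n}\!\Big[\eta(\rho,m,J) - \eta(\bar\rho,\bar m,\bar J) - \bar\eta_\rho(\rho-\bar\rho) - \bar u\!\cdot\!(m-\bar m) - \bar v\!\cdot\!(J-\bar J)\Big]dx,$$
the plan is to control the first piece by the dissipation inequality \eqref{dissNS}, the second by the smooth-solution identity \eqref{single etabar} (where the perturbation $\bar e$ appears), and the three cross terms by inserting test functions built from the smooth solution into the weak formulation, finally passing to the limit against a standard time cut-off $\theta(\tau)$ approximating $\chi_{[0,t]}$.

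Concretely, I would choose $\psi = \theta\,\bar\eta_\rho$, $\phi = \theta\,\bar u$ and $\varphi = \theta\,\bar v$ in the three weak equations of Definition \ref{deFNS}; these are admissible thanks to smoothness of $\bar\rho$. After subtracting the corresponding identities satisfied by $(\bar\rho,\bar m,\bar J)$ (which contain the error term $\bar e$) and using the chain rule together with the enthalpy relations \eqref{eq:defh}--\eqref{eq:defh2}, this yields the time-integrated evolution of the cross terms. Combining with \eqref{dissNS} and \eqref{single etabar} then produces a rigorous, integrated version of the formal computation \eqref{formalcomp}.

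At this stage one must match the raw output to the terms in \eqref{eq:relenNS}. The convective flux $m\otimes m/\rho$, when tested against $\nabla\bar u$ and combined with the $\bar u\!\cdot\!(m-\bar m)$ cross term, generates the quadratic $\frac{1}{\epsilon}\rho\nabla\bar u:(u-\bar u)\otimes(u-\bar u)$; the pressure contribution, combined with the $\bar\eta_\rho(\rho-\bar\rho)$ cross term, produces $\frac{1}{\epsilon}p(\rho|\bar\rho)\dive\bar u$; the friction term together with the cross terms yields the dissipative $\frac{1}{\epsilon^2}\rho|u-\bar u|^2$; the Lamé-viscous contributions reassemble into the two signed quadratics $\mu_L(\rho)|D(u-\bar u)|^2$, $\lambda_L(\rho)|\dive(u-\bar u)|^2$ plus the linear-in-$(u-\bar u)$ corrections involving $D\bar u$ and $\dive\bar u$; and the $\bar e$-driven term on the RHS of \eqref{single etabar}, combined with its cross-term counterpart, gives $\bar e\cdot\frac{\rho}{\bar\rho}(u-\bar u)$.

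The main obstacle is the bookkeeping of the Korteweg stresses. One needs to process the contributions coming from $\dive(\mu(\rho)\nabla v) + \tfrac12\nabla(\lambda(\rho)\dive v)$ in the momentum equation and from $\dive(\mu(\rho)^t\nabla u) + \tfrac12\nabla(\lambda(\rho)\dive u)$ in the drift-momentum equation, tested against $\bar u$ and $\bar v$, and subtract the analogous smooth contributions, in such a way that everything reorganizes into three clean blocks: (i) the symmetric quadratic $-\frac{1}{\epsilon}\rho\nabla\bar u:(v-\bar v)\otimes(v-\bar v)$; (ii) the mixed correction involving $\mu''(\rho)\nabla\rho - \mu''(\bar\rho)\nabla\bar\rho$ paired with $(v-\bar v)\dive\bar u - (u-\bar u)\dive\bar v$; and (iii) the mixed correction involving $\mu'(\rho) - \mu'(\bar\rho)$ paired with $(v-\bar v)\cdot\nabla\dive\bar u - (u-\bar u)\cdot\nabla\dive\bar v$. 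This identification exploits the Bohm-type identity and, crucially, the algebraic relation $\lambda(\rho) = 2(\mu'(\rho)\rho - \mu(\rho))$ from \eqref{BDREL}, extending the computation of \cite{Bresh,CCL} to the non-linear capillarity $\mu(\rho) = \rho^{(s+3)/2}$. Once this algebraic reduction is performed, sending $\theta$ to $\chi_{[0,t]}$ delivers \eqref{eq:relenNS}.
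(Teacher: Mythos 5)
Your proposal is correct and follows essentially the same route as the paper, which simply invokes the proof of Theorem 5.2 in \cite{CCL}: that proof is exactly the relative-entropy computation you describe, testing the weak formulation of Definition \ref{deFNS} with $\theta\,\bar\eta_\rho$, $\theta\,\bar u$, $\theta\,\bar v$, combining with \eqref{dissNS} and \eqref{single etabar}, and reorganizing the Korteweg stresses via \eqref{BDREL} into the quadratic and mixed correction terms of \eqref{eq:relenNS}. Your outline of the bookkeeping (convective, pressure, friction, viscous, and capillarity blocks) matches the structure of that argument.
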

\begin{proof}
The proof is the same as the proof of Theorem 5.2 in \cite{CCL}.
\end{proof}

The non-monotonicity of the pressure will only become relevant in the next section.

\section{Stability result and convergence of the diffusive limit}
\label{sec:stabconv}
With the relative entropy estimate \eqref{eq:relenNS} of Theorem \ref{relativeentropyNS} at hand, we are able to control the convergence to the our diffusive relaxation limit in terms of the convex part of the relative entropy. Indeed, we consider:
\begin{align}\label{eq:distfi}
    \Psi_{\gamma}(t) : & = \int_{\mathbb{T}^n} \left (h(\rho|\bar{\rho}) + \frac{1}{2} \rho \left| \frac{m}{\rho} - \frac{\bar{m}}{\bar{\rho}}\right|^2 + \frac{1}{2} \rho \left|\frac{J}{\rho} - \frac{\bar{J}}{\bar{\rho}}\right|^2 \right )dx - \int_{\mathbb{T}^n} h_e(\rho|\bar \rho)dx \nonumber \\
    & = \int_{\mathbb{T}^n} \left (h_{\gamma}(\rho|\bar{\rho}) + \frac{1}{2} \rho \left| \frac{m}{\rho} - \frac{\bar{m}}{\bar{\rho}}\right|^2 + \frac{1}{2} \rho \left|\frac{J}{\rho} - \frac{\bar{J}}{\bar{\rho}}\right|^2 \right )dx,
\end{align}
with $h(\rho|\bar \rho)= h_\gamma(\rho|\bar \rho)+ h_e(\rho|\bar \rho)$.
The proof of our convergence result will follow the blueprint of \cite{LT,LT2, CCL}. We recall that
\begin{equation*}
   h(\rho) = \frac{\rho^\gamma}{\gamma-1} + e(\rho)\qquad \gamma>1.
 \end{equation*}
A straightforward computation gives us:
\begin{equation}\label{pcontrol}
   p(\rho|\bar{\rho})= (\gamma - 1) h_{\gamma}(\rho|\bar{\rho}) + p_e (\rho|\bar \rho),
\end{equation}
and the term in \eqref{eq:relenNS} involving the pressure will be controlled in terms of the convex part of the relative entropy $\Psi_{\gamma}$. 
The last two integrals of \eqref{eq:relenNS} can be controlled by using the following lemma.
\begin{lemma}\cite[Lemma 35]{Bresh}\label{lemma8}
Let  $\mu(\rho)= \rho^{\frac{s+3}{2}}$ with  $\gamma \geq s+2$ and $s \geq -1$. Then, the following estimate holds $$\rho |\mu'(\rho)-\mu'(\bar{\rho})|^2 \leq C(\bar{\rho})h_{\gamma}(\rho|\bar{\rho}),$$ with $C(\bar{\rho})$ uniformly bounded for $\bar{\rho}$ belonging to compact sets in $\mathbb{R}^+$.   
\end{lemma}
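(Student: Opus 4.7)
The plan is to introduce the quotient
\[
  G(\rho;\bar\rho) := \frac{\rho\,|\mu'(\rho) - \mu'(\bar\rho)|^2}{h_\gamma(\rho|\bar\rho)}, \qquad \rho \in [0,\infty)\setminus\{\bar\rho\},
\]
and to show that it extends to a bounded continuous function of $(\rho,\bar\rho)$ on $[0,\infty]\times K$ for every compact $K\subset(0,\infty)$. Since $\mu'(\rho)=\tfrac{s+3}{2}\rho^{(s+1)/2}$ is an explicit power and $h_\gamma$ is smooth and strictly convex on $(0,\infty)$, uniform boundedness of $G$ reduces to three asymptotic checks: the diagonal $\rho=\bar\rho$, the origin $\rho=0$, and infinity $\rho\to\infty$. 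Once this is established, $C(\bar\rho):=\sup_\rho G(\rho;\bar\rho)$ is uniformly bounded on compacts, which is exactly the claim.

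First I would handle the diagonal by Taylor expansion: $\mu'(\rho)-\mu'(\bar\rho)=\mu''(\bar\rho)(\rho-\bar\rho)+O((\rho-\bar\rho)^2)$ and, by convexity, $h_\gamma(\rho|\bar\rho)=\tfrac12\, h_\gamma''(\bar\rho)(\rho-\bar\rho)^2+O((\rho-\bar\rho)^3)$, where $h_\gamma''(\bar\rho)=\gamma(\gamma-1)\bar\rho^{\gamma-2}>0$. Dividing yields the finite, $\bar\rho$-continuous on-diagonal value $2\bar\rho|\mu''(\bar\rho)|^2/h_\gamma''(\bar\rho)$. At $\rho=0$ the hypothesis $s\geq -1$ ensures that $\mu'$ is bounded on $[0,\infty)$ (and is in fact constant when $s=-1$, in which case the numerator vanishes identically), so the numerator vanishes while $h_\gamma(0|\bar\rho)>0$, giving $G(0;\bar\rho)=0$.

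The decisive regime is $\rho\to\infty$. Here the numerator is asymptotic to $\bigl(\tfrac{s+3}{2}\bigr)^2\rho^{s+2}$ while the denominator behaves like $\rho^\gamma$, so $G$ behaves like $\rho^{s+2-\gamma}$, which stays bounded precisely under the assumption $\gamma\geq s+2$. This is where the hypothesis enters and is the main obstacle: the estimate is sharp, since without $\gamma \geq s+2$ the quotient blows up at infinity. To promote pointwise boundedness to uniformity in $\bar\rho\in K$, I would split the $\rho$-axis into $[0,R]$ and $(R,\infty)$ with $R:=2\max K+1$. On the compact product $[0,R]\times K$ joint continuity of $G$ (after the continuous extension across the diagonal established above) delivers a uniform bound by compactness, while on $(R,\infty)\times K$ the elementary polynomial estimates $|\mu'(\rho)-\mu'(\bar\rho)|^2\leq C(K)\rho^{s+1}$ and $h_\gamma(\rho|\bar\rho)\geq c(K)\rho^\gamma$ yield $G(\rho;\bar\rho)\leq C(K)\rho^{s+2-\gamma}$, bounded by $\gamma\geq s+2$. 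Combining the two regions gives the claimed uniform bound $C(\bar\rho)\leq C(K)$ for $\bar\rho\in K$.
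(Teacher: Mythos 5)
Your argument is correct: the quotient $\rho|\mu'(\rho)-\mu'(\bar\rho)|^2/h_\gamma(\rho|\bar\rho)$ is controlled near the diagonal by the quadratic behaviour of $h_\gamma(\cdot|\bar\rho)$, at the origin by $s\geq-1$, and at infinity by the exponent comparison $s+2\leq\gamma$ together with the lower bound $h_\gamma(\rho|\bar\rho)\geq c\,\rho^\gamma$ for $\rho\geq 2\bar\rho$, and the compactness argument gives uniformity in $\bar\rho\in K$. The paper itself gives no proof -- it only cites \cite[Lemma 35]{Bresh} -- and your splitting into a compact region plus the far-field power comparison is exactly the standard argument used there; the only cosmetic slip is the claim that $\mu'$ is bounded on all of $[0,\infty)$ (it is bounded only near $0$ unless $s=-1$), which is harmless since you only use it at $\rho=0$.
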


Concerning the non-convex part $e(\rho)$ in \eqref{eq:distfi} we make use of the following result from \cite{JT2017} adapted to our system.
\begin{lemma}\cite[Lemma 3.1]{JT2017}\label{lemmaJAN}
    Let $(\rho,m)$ and $(\bar\rho, \bar m)$ be a weak and strong solution of \eqref{ek}$_1$ respectively then
    \begin{multline}
         \int_{\mathbb{T}^n} e(\rho|\bar \rho)|_tdx  - \int_{\mathbb{T}^n} e(\rho|\bar \rho)|_0 dx = - \frac{1}{\epsilon}\iint_{\stt}  \dive \bar m (e'(\rho)- e'(\bar \rho) - e''(\bar \rho)(\rho-\bar \rho)) dxd\tau \nonumber \\
          + \frac{1}{\epsilon}  \iint_{\stt}  (e''(\rho) \nabla \rho - e''(\bar \rho) \nabla \bar \rho) (m- \bar m) dxd\tau.
    \end{multline}
\end{lemma}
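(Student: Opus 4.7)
The plan is to obtain the identity from a chain-rule computation applied to $e(\rho|\bar\rho)=e(\rho)-e(\bar\rho)-e'(\bar\rho)(\rho-\bar\rho)$, followed by two successive spatial integrations by parts, and to justify the manipulation rigorously via the weak formulation of the continuity equation together with the smoothness of $(\bar\rho,\bar m)$. Formally, differentiating in time and cancelling the symmetric $e'(\bar\rho)\partial_t\bar\rho$ contributions gives
\begin{equation*}
\partial_t e(\rho|\bar\rho)=(e'(\rho)-e'(\bar\rho))\partial_t\rho-e''(\bar\rho)(\rho-\bar\rho)\partial_t\bar\rho.
\end{equation*}
Substituting $\partial_t\rho=-\tfrac1\epsilon\dive m$ and $\partial_t\bar\rho=-\tfrac1\epsilon\dive\bar m$ and integrating over $\stt$ then yields the starting identity
\begin{equation*}
\int_{\mathbb{T}^n}e(\rho|\bar\rho)\Big|_0^t dx=-\frac{1}{\epsilon}\iint_{\stt}(e'(\rho)-e'(\bar\rho))\dive m\,dxd\tau+\frac{1}{\epsilon}\iint_{\stt}e''(\bar\rho)(\rho-\bar\rho)\dive\bar m\,dxd\tau.
\end{equation*}

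To pass from this intermediate form to the one in the statement, I would integrate the first integral by parts once to produce $\tfrac{1}{\epsilon}\iint(e''(\rho)\nabla\rho-e''(\bar\rho)\nabla\bar\rho)\cdot m$, and then split $m=(m-\bar m)+\bar m$. The $(m-\bar m)$-piece is already the second right-hand term of the lemma. The $\bar m$-piece is integrated back by parts to give $-\tfrac{1}{\epsilon}\iint(e'(\rho)-e'(\bar\rho))\dive\bar m$, which combines with the $e''(\bar\rho)(\rho-\bar\rho)\dive\bar m$ integral into the single ``cubic'' remainder
\begin{equation*}
-\frac{1}{\epsilon}\iint_{\stt}\dive\bar m\,\bigl(e'(\rho)-e'(\bar\rho)-e''(\bar\rho)(\rho-\bar\rho)\bigr)\,dxd\tau,
\end{equation*}
which is exactly the first right-hand term of the lemma.

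The main obstacle is rigour when $\rho$ is only a weak solution: the chain-rule identity $\partial_t e(\rho)=e'(\rho)\partial_t\rho$ is not directly available. The saving feature is the standing assumption $e\in C_c^\infty(0,\infty)$, which forces $e,e',e''$ to be bounded with compact support in $\rho$ and thereby removes all growth or integrability concerns. The plan in that step is to add three separately rigorous identities. First, one renormalises the weak continuity equation for $\rho$ against $\beta=e$ to obtain $\int e(\rho)\,dx\big|_0^t$; since $\beta$ is smooth and compactly supported, this is a standard DiPerna--Lions commutator step. Second, one tests the weak continuity equation against the admissible test function $\psi(t,x)=\theta(t)\,e'(\bar\rho(t,x))$, which is legitimate because $\bar\rho$ is smooth; this recovers $\int \rho\,e'(\bar\rho)\,dx\big|_0^t$ in terms of $\iint\rho\,e''(\bar\rho)\partial_t\bar\rho$ and $\tfrac1\epsilon\iint e''(\bar\rho)\nabla\bar\rho\cdot m$. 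Third, the purely strong contribution $\int e(\bar\rho)\,dx\big|_0^t$ is handled by the classical chain rule on $\bar\rho$. Adding these three identities, using $\partial_t\bar\rho+\tfrac1\epsilon\dive\bar m=0$ to eliminate $\partial_t\bar\rho$, and rearranging exactly as sketched above yields the claimed identity.
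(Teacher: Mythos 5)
Your formal derivation is correct and is exactly the computation behind the cited result: the paper gives no proof of this lemma, only the reference to [JT2017, Lemma 3.1], and your chain of identities (differentiate $e(\rho|\bar\rho)$, substitute the two continuity equations, integrate by parts once, split $m=(m-\bar m)+\bar m$, integrate the $\bar m$ piece back) reproduces that argument; I checked the algebra and it closes.

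The one step that needs adjustment is your appeal to DiPerna--Lions for $\int_{\mathbb{T}^n} e(\rho)\,dx$. The commutator lemma requires velocity regularity of the type $u\in L^1_t W^{1,1}_x$, which Definition 3.1 does not supply (one only has $\sqrt{\rho}\,u\in L^\infty_t L^2_x$ and a degenerately weighted bound on $Du$), so as literally stated that step would not go through. What actually saves it is structure you already have in hand: mollify the \emph{pair} $(\rho,m)$ in space --- the continuity equation is linear in $(\rho,m)$, so $(\rho_\delta,m_\delta)$ solves it exactly --- apply the classical chain rule to $\rho_\delta$, integrate by parts over the torus keeping $m_\delta$ (rather than $\rho_\delta u_\delta$) as the flux, so that no velocity commutator ever appears, and pass to the limit using that $e''(\rho)\nabla\rho=\nabla\bigl(e'(\rho)\bigr)$ lies in $L^\infty_t L^2_x$: indeed $e'$ composed with $z\mapsto z^{2/(s+2)}$ is Lipschitz (because $e''$ is supported away from $0$ and $\infty$) and $\nabla\rho^{(s+2)/2}=C(s)\sqrt{\rho}\,v\in L^\infty_t L^2_x$ by the energy bound, exactly as recorded in Remark 4.4. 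This last point also tells you how the expression $e''(\rho)\nabla\rho$ in the statement must be interpreted when $\rho$ may vanish. With that single replacement your proof is complete and matches the intended argument.
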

\begin{remark}\label{rem1}
    From the Definition of weak solution \eqref{deFNS} and in particular \eqref{dissNS} we have:
   \begin{align*}
        &\rho^{\frac{s+2}{2}} \in L^2 (\mathbb{T}^n)  \text{ 
 for a.e. t  since } \gamma > s+2, \quad s\geq -1\\
        & \sqrt{\rho}v = C(s) \nabla \rho^{\frac{s+2}{2}} \in L^2 (\mathbb{T}^n) \text{ for a.e. t } 
   \end{align*}
 therefore by Sobolev embedding we get that $\rho^{s+2} \in L^3(\mathbb{T}^n)$ {for a.e. t }
\end{remark}
We are now ready to state our main convergence theorem.

\begin{theorem}\label{theo:stabNSK}
Let $T>0$ be fixed, let $(\rho,m, J)$ be as in Definition $\ref{deFNS}$ and let $\bar{\rho}$ be a smooth solution of $\eqref{ekb-scaledstrong}$ such that $\bar{\rho} \geq \delta > 0$, $\rho>c_p \geq0$, $\bar m$ and $\bar J$ defined as $\eqref{eq:defbarutheo}$. Assume the pressure $p(\rho)$ is given by \eqref{eqp} with $\gamma > 1$. Assume $\mu(\rho) = \rho^{\frac{s+3}{2}}$ with  $-1 \leq s \leq \gamma -2$, and 
	\begin{equation}
	    \label{lamecontrol}
	    \left\| \mu_L(\rho) \right\|_{L^{\infty}((0,t);L^1(\mathbb{T}^n))} , \left\| \lambda_L(\rho) \right\|_{L^{\infty}((0,t);L^1(\mathbb{T}^n))} \leq  \tilde E.
	\end{equation}
	for a positive constant $\tilde E$ independent from $\epsilon$.
	Then, for $t \in [0,T]$, the stability estimate 
	\begin{equation}\label{stabtheosec5}
	\Psi_{\gamma}(t) \leq C(\Psi_{\gamma}(0) + \epsilon^4 +\nu\epsilon),
	\end{equation}
 holds true, where $C$ is a positive constant depending on
	$T$, $M$, the $L^1$ bound for $\rho$,  and $E_o$, the energy bound, both assumed to be uniform in $\epsilon$, $\bar{\rho}$ and its derivatives. Moreover, if $\Psi_{\gamma}(0) \rightarrow 0$ as $\epsilon \rightarrow 0$, then as $\epsilon \rightarrow 0$
	\begin{equation}
	\sup_{t \in [0,T]} \Psi_{\gamma}(t) \rightarrow 0.
	\end{equation}
\end{theorem}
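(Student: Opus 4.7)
The plan is to start from the relative entropy inequality \eqref{eq:relenNS}. The integrand decomposes as a $\Psi_\gamma$-density plus $h_e(\rho|\bar\rho)$; using Lemma \ref{lemmaJAN}, the $\int h_e(\rho|\bar\rho)dx$ contribution is moved to the right-hand side, so that only $\Psi_\gamma(t)$ remains on the left while two explicit non-convex integrals appear on the right. This produces a Gr\"onwall-ready inequality for $\Psi_\gamma(t)$ whose right-hand side consists of (i) the error term with $\bar e$, (ii) the transport terms involving $\nabla\bar u$ in both $(u-\bar u)$ and $(v-\bar v)$ quadratic forms, (iii) the capillarity cross terms with $\mu',\mu''$, (iv) the viscous cross terms with $\nu$, (v) the full pressure term, and (vi) the two non-convex integrals generated by Lemma \ref{lemmaJAN}.

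For the routine estimates I would proceed as follows. The transport terms (ii) are bounded pointwise-in-time by $\|\nabla\bar u\|_\infty \Psi_\gamma(\tau)$, feeding directly into Gr\"onwall. The error term (i) is handled by Young's inequality with weight $\epsilon^{-2}$: the quadratic $(u-\bar u)$ piece is absorbed into the dissipation $-\tfrac{1}{\epsilon^2}\iint \rho|u-\bar u|^2$ and the remainder is $O(\epsilon^4)$, since $\bar e = O(\epsilon)$ by \eqref{eq:error} and $\rho/\bar\rho^{2}$ is controlled by the mass bound and the lower bound on $\bar\rho$. The viscous cross terms (iv) are split via Young's inequality, so that one part is absorbed into $-\tfrac{2\nu}{\epsilon}\iint \mu_L|D(u-\bar u)|^2$ and the residue is bounded using \eqref{lamecontrol} together with $\|D\bar u\|_\infty$, yielding the $O(\nu\epsilon)$ contribution after the appropriate choice of Young weights. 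The capillarity cross terms (iii) are dispatched with Lemma \ref{lemma8}: after integrating by parts (using $\mu''(\rho)\nabla\rho = \nabla\mu'(\rho)$), the bound $\rho|\mu'(\rho)-\mu'(\bar\rho)|^2 \leq C(\bar\rho) h_\gamma(\rho|\bar\rho)$ combined with Young's inequality lets us absorb them into $\Psi_\gamma$ up to Gr\"onwall terms.

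The main obstacle is controlling the non-convex contributions (v)--(vi). The pressure decomposition \eqref{pcontrol} splits (v) into a convex piece $(\gamma-1) h_\gamma(\rho|\bar\rho)\dive\bar u$, which feeds directly into Gr\"onwall, and a non-convex piece $\tfrac{1}{\epsilon}\iint p_e(\rho|\bar\rho)\dive\bar u$. Following the strategy of \cite{JT2017}, I would control the latter together with the two integrals from Lemma \ref{lemmaJAN} by splitting the spatial domain into the set $A$ where $\rho$ lies in a compact neighbourhood of $\mathrm{supp}(e') \cup \mathrm{range}(\bar\rho)$ and its complement. On $A$, a Taylor expansion of $e$ around $\bar\rho$ combined with the convexity of $h_\gamma$ gives $|p_e(\rho|\bar\rho)| + |e'(\rho) - e'(\bar\rho) - e''(\bar\rho)(\rho-\bar\rho)| \leq C h_\gamma(\rho|\bar\rho)$; on $A^c$, the compact support of $e'$ makes $p_e(\rho|\bar\rho)$ a bounded constant which is dominated by $h_\gamma(\rho|\bar\rho)$ thanks to the latter's growth like $\rho^\gamma$ at infinity and positivity away from $\bar\rho$. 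The integral $\iint (e''(\rho)\nabla\rho - e''(\bar\rho)\nabla\bar\rho)(m-\bar m)$ is the most delicate: I would rewrite $e''(\rho)\nabla\rho = \nabla e'(\rho)$, integrate by parts, and use the $L^3$ control on $\rho^{s+2}$ from Remark \ref{rem1} together with the energy bounds on $\sqrt\rho\, u$ and $\sqrt\rho\, v$ to absorb the remainder into $\Psi_\gamma$ plus a Gr\"onwall term. Collecting all estimates and applying Gr\"onwall's inequality yields \eqref{stabtheosec5}; the convergence $\sup_{t\in[0,T]} \Psi_\gamma(t) \to 0$ then follows since $\epsilon^4 + \nu\epsilon \to 0$ as $\epsilon \to 0$.
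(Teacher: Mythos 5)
Your overall architecture coincides with the paper's: start from \eqref{eq:relenNS}, peel off the non-convex part $h_e(\rho|\bar\rho)$ via Lemma \ref{lemmaJAN}, and close a Gr\"onwall inequality for $\Psi_\gamma$. The routine estimates also match: the error term gives $O(\epsilon^4)$ by Young's inequality with weight $\epsilon^{-2}$ and absorption into the friction dissipation; the transport terms feed into Gr\"onwall; the viscous cross terms give $O(\nu\epsilon)$ using \eqref{lamecontrol} and $D\bar u=O(\epsilon)$; the capillarity cross terms are handled by Lemma \ref{lemma8} (which is where $s\le\gamma-2$ enters); and the pressure splits as $p(\rho|\bar\rho)=(\gamma-1)h_\gamma(\rho|\bar\rho)+p_e(\rho|\bar\rho)$ with the non-convex part dominated by $h_\gamma(\rho|\bar\rho)$ --- your domain splitting is just a more explicit version of the paper's appeal to the bounded derivatives of $p_e$ (only note that on $A^c$ the quantity $p_e(\rho|\bar\rho)$ is not bounded but grows linearly in $\rho$, which is still dominated by $h_\gamma(\rho|\bar\rho)\sim\rho^\gamma$).

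The genuine gap is your treatment of $J_2=\frac{1}{\epsilon}\iint_{Q_t}(e''(\rho)\nabla\rho-e''(\bar\rho)\nabla\bar\rho)\cdot(m-\bar m)\,dx\,d\tau$. Writing $e''(\rho)\nabla\rho=\nabla e'(\rho)$ and integrating by parts produces $-\frac{1}{\epsilon}\iint_{Q_t}\bigl(e'(\rho)-e'(\bar\rho)\bigr)\dive(m-\bar m)\,dx\,d\tau$, and $\dive(m-\bar m)$ is controlled by nothing in this framework: the only dissipation available for absorption is $\frac{1}{\epsilon^2}\iint_{Q_t}\rho|u-\bar u|^2$, which contains no derivatives of $m$, and invoking the continuity equation merely converts the term back into $\iint_{Q_t}(e'(\rho)-e'(\bar\rho))\,\partial_t(\rho-\bar\rho)$, i.e.\ it undoes Lemma \ref{lemmaJAN}. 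The $L^3$ bound on $\rho^{s+2}$ from Remark \ref{rem1} does not rescue this either, since a raw bound on $\nabla\rho$ (rather than on a difference controlled by the relative entropy) leaves an $O(1)$ remainder after Young's inequality, not an $O(\int_0^t\Psi_\gamma)$ one. The step you are missing --- and the reason the augmented formulation is used at all here --- is the drift-velocity substitution $e''(\rho)\nabla\rho=\frac{e''(\rho)\rho}{\mu'(\rho)}\,v$, where $\frac{e''(\rho)\rho}{\mu'(\rho)}$ is bounded because $e''$ is compactly supported in $(0,\infty)$. Together with $m-\bar m=\rho(u-\bar u)+\bar u(\rho-\bar\rho)$ this decomposes $J_2$ into four pieces involving the differences $(v-\bar v)$, $(u-\bar u)$, $(\rho-\bar\rho)$ and the Lipschitz difference of $\rho\mapsto e''(\rho)\rho/\mu'(\rho)$; the $\frac{1}{\epsilon}$ prefactor is then eliminated either by the $O(\epsilon)$ size of $\bar u$ or by Young's inequality sending $\frac{1}{8\epsilon^2}\iint_{Q_t}\rho|u-\bar u|^2$ into the friction dissipation, leaving only $\int_0^t\Psi_\gamma(\tau)\,d\tau$ on the right. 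Without this substitution your Gr\"onwall inequality does not close.
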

\begin{proof}
In view of the definition of $\Psi_{\gamma}$ in  \eqref{eq:distfi}, from the relative entropy estimate given by   Theorem \ref{relativeentropyNS} we get:
\begin{align}\label{phie}
 &\Psi_{\gamma}(t) + \frac{1}{\epsilon^2} \iint_{\stt} \rho \left| \frac{m}{\rho} - \frac{\bar{m}}{\bar{\rho}} \right|^2dxd\tau  + \frac{2\nu}{\epsilon} \iint_{ \stt } \mu_L(\rho)|D(u)-D(\bar u)|^2 dxd\tau \nonumber \\
 & +\frac{\nu}{\epsilon}\iint_{ \stt } \lambda_L(\rho)[\dive u - \dive \bar u]^2 dxd\tau \\&
 \leq \Psi_{\gamma}(0) + \iint_{\stt} (|Q|+ |E|)dxd\tau 
 + \int_{\mathbb{T}^n}e(\rho|\bar \rho)|_{t} - e(\rho|\bar \rho)|_{0} dx,\nonumber
\end{align}
where the terms $Q$ and $E$ are given by
\begin{equation*}
E : = \bar{e} \cdot \frac{\rho}{\bar{\rho}} \left( \frac{m}{\rho} - \frac{\bar{m}}{\bar{\rho}} \right), \quad Q := Q_1 + Q_2+ Q_3,
\end{equation*}
with
\begin{align*}
Q_1 : = & - \frac{1}{\epsilon} \iint_{\stt} \rho \nabla \bar{u} : [(u-\bar{u}) \otimes (u-\bar{u})]dxd\tau 
\\
& - \frac{1}{\epsilon} \iint_{\stt} \rho \nabla \bar{u} : [(v-\bar{v}) \otimes (v-\bar{v})]dxd\tau 
  - \frac{1}{\epsilon} \iint_{\stt} p(\rho|\bar{\rho}) \dive \bar{u} dxd\tau ,
\end{align*}
\begin{align*}
Q_2 : = & -  \frac{1}{\epsilon} \iint_{\stt} \rho[(\mu''(\rho)\nabla \rho - \mu''(\bar{\rho})\nabla \bar{\rho})\cdot((v-\bar{v})\dive \bar{u} - (u-\bar{u})\dive \bar{v})]dxdt 
\\ 
&  - \frac{1}{\epsilon} \iint_{\stt} \rho (\mu'(\rho)- \mu'(\bar{\rho}))((v-\bar{v}) \cdot \nabla \dive \bar{u} -  (u-\bar{u})\cdot \nabla \dive \bar{v})dxd\tau,
\end{align*}
\begin{align*}
Q_3 &: =  - \frac{2\nu}{\epsilon}\iint_{\stt}\mu_L(\rho) D(\bar u):D(u-\bar u) dxd\tau   - \frac{\nu}{\epsilon}\iint_{ \stt }\lambda_L(\rho)\dive \bar u(\dive u - \dive \bar u)dxd\tau.
\end{align*}
Those terms can be controlled in the same way as in \cite{CCL}. We present here only the final bounds:
\begin{equation*}
\iint_{\stt}|E|dx d\tau \leq CT\epsilon^4 + \frac{1}{8\epsilon^2}\iint_{\stt} \rho\left|\frac{m}{\rho} - \frac{\bar m }{\bar \rho}\right|^2dxd\tau,
\end{equation*}
where $C$  depends on the bound for $\bar \rho$ and on the (uniform)   $L^1$ bound for $\rho$ and $\bar e = O(\epsilon)$.
Concerning the terms $Q_1$ and $Q_2$ we have:
 \begin{equation*}
 \iint_{\stt}|Q_1|+ |Q_2|dxd\tau \leq \frac{1}{8\epsilon^2}\iint_{\stt} \rho\left|\frac{m}{\rho} - \frac{\bar m }{\bar \rho}\right|^2dxd\tau + 
  \tilde C \int_{(0,t)}\Psi_{\gamma}(\tau)d\tau,
 \end{equation*}
where $\tilde  C$  depends on   the bounds $\dive \bar u/ \epsilon=O(1)$ and $\nabla \dive \bar u/ \epsilon=O(1)$. We discuss here only the term involving the pressure:
\begin{align*}
&\frac{1}{\epsilon}\iint_{\stt} p(\rho|\bar{\rho})\dive \bar{u} dxd\tau  = \frac{1}{\epsilon}\iint_{\stt}(\gamma - 1) h(\rho|\bar{\rho})\dive \bar{u} dxd\tau
 + \frac{1}{\epsilon}\iint_{\stt} p_e (\rho|\bar \rho)\dive \bar u dxd\tau \nonumber \\
&\leq C_3 \int_{(0,t)} \Psi_{\gamma}(s)d\tau + C_5 \iint_{\stt} p_e''(\bar{\rho})|\rho-\bar\rho|^2 dxd\tau \nonumber \\
&\leq \hat{c} \int_{(0,t)} \Psi_{\gamma}(s)d\tau
\end{align*}
since the derivatives of $p_e$ are uniformly bounded and $\bar u= O(\epsilon)$.

Finally for $Q_3$:
\begin{multline*}
   \iint_{\stt}|Q_3|dxd\tau  \leq  \nu \bar{C} \epsilon  + \frac{\nu}{\epsilon} \iint_{\stt} \mu_L(\rho)|D(u-\bar u )|^2 dxd\tau \\
    + \frac{\nu}{2\epsilon} \iint_{\stt} \lambda_L(\rho) |\dive (u - \bar u)|^2 dxd\tau .
\end{multline*}

Concerning the last integral in \eqref{phie}:
\begin{equation*}
    \int_{\mathbb{T}^n}e(\rho|\bar \rho)|_{t} - e(\rho|\bar \rho)|_{0} dx
\end{equation*}
we use Lemma \ref{lemmaJAN}, and Theorem 3.2 in \cite{JT2017}. In particular we have to control:
\begin{multline}
         - \frac{1}{\epsilon}\iint_{\stt}  \dive \bar m (e'(\rho)- e'(\bar \rho) - e''(\bar \rho)(\rho-\bar \rho)) dxd\tau 
        \\
        +\frac{1}{\epsilon}\int_{\stt}  (e''(\rho) \nabla \rho - e''(\bar \rho) \nabla \bar \rho) (m- \bar m) dxd\tau \nonumber 
         =: J_1 +J_2.
\end{multline}
Concerning $J_1$, recalling Remark \ref{rem1} and following the approach of Theorem 3.2 \cite{JT2017} and Lemma 1 \cite{GLT} we have:
\begin{multline}
     - \frac{1}{\epsilon}\iint_{\stt} \dive \bar m (e'(\rho)- e'(\bar \rho) - e''(\bar \rho)(\rho-\bar \rho)) dxd\tau  \\
     = - \frac{1}{\epsilon} \iint_{\stt} \dive \bar m e'(\rho|\bar{\rho}) dxd\tau 
      \leq C \iint_{\stt} |\rho-\bar\rho|^2 dxd\tau  
      \leq C \int_0^t \Psi_{\gamma}(\tau) d\tau 
\end{multline}
where $C$ depends on the supremum of the third derivative of $e$.
Concerning $J_2$ we rewrite the integral in the following way since $\rho> c_p\geq 0$:
\begin{align}
    & \frac{1}{\epsilon}\iint_{\stt} (e''(\rho) \nabla \rho - e''(\bar \rho) \nabla \bar \rho) (m- \bar m) dxd\tau \nonumber \\
 & = \frac{1}{\epsilon}\iint_{\stt} \frac{e''(\rho)}{\mu'(\rho)} \rho (v -\bar v) \rho (u- \bar u)dxd\tau + \frac{1}{\epsilon} \iint_{\stt}  \frac{e''(\rho)}{\mu'(\rho)} \rho (v -\bar v) \bar u (\rho- \bar \rho) dxd\tau \nonumber \\
  & + \frac{1}{\epsilon}\iint_{\stt}
 \left( \frac{e''(\rho)}{\mu'(\rho)}\rho - \frac{e''(\bar \rho)}{\mu'(\bar \rho)} \bar \rho \right)  \bar v \rho (u - \bar u) dxd\tau + \frac{1}{\epsilon}\iint_{\stt}
 \left( \frac{e''(\rho)}{\mu'(\rho)}\rho - \frac{e''(\bar \rho)}{\mu'(\bar \rho)} \bar \rho \right)  \bar v \bar u (\rho-\bar \rho) dxd\tau.
\end{align}
Recalling that $e(\rho) \in C_c^{\infty}(0,\infty)$ and using Young's inequality we get:
\begin{align*}
    &\frac{1}{\epsilon}\iint_{\stt} \frac{e''(\rho)}{\mu'(\rho)} \rho (v -\bar v) \rho (u- \bar u)dxd\tau \leq  \frac{1}{8\epsilon^2}\iint_{\stt} \rho|u-\bar u|^2 dxd\tau +         c_1 \iint_{\stt} \rho|v-\bar v|^2 dxd\tau,      \nonumber \\
    &\frac{1}{\epsilon} \iint_{\stt} \frac{e''(\rho)}{\mu'(\rho)} \rho (v -\bar v) \bar u (\rho- \bar \rho) dxd\tau \leq c_2 \iint_{\stt} \rho|v-\bar v|^2 + |\rho-\bar \rho|^2 dxd\tau \leq \int_0^t \Psi_{\gamma}(\tau) d\tau, \nonumber \\
    &\frac{1}{\epsilon} \iint_{\stt}
 \left( \frac{e''(\rho)}{\mu'(\rho)}\rho - \frac{e''(\bar \rho)}{\mu'(\bar \rho)} \bar \rho \right)  \bar v \rho (u - \bar u) dxd\tau \leq \frac{1}{\epsilon} \iint_{\stt} C(\bar \rho,\bar v,M)|\rho-\bar \rho|\sqrt{\rho}|u-\bar u| dxd\tau \nonumber \\
 & \qquad \qquad \qquad \qquad \qquad \qquad \qquad \qquad \qquad \quad \leq \frac{1}{8\epsilon^2}\iint_{\stt} \rho|u-\bar u|^2 dxd\tau + \int_0^t \Psi_{\gamma}(\tau) d\tau ,\\
    & \frac{1}{\epsilon}\iint_{\stt}
 \left( \frac{e''(\rho)}{\mu'(\rho)}\rho - \frac{e''(\bar \rho)}{\mu'(\bar \rho)} \bar \rho \right)  \bar v \bar u (\rho-\bar \rho) dxd\tau 
 \leq  \iint_{\stt} C(\bar \rho,\bar v,M)|\rho-\bar \rho|^2 dx d\tau \leq \int_0^t \Psi_{\gamma}(\tau) d\tau,
\end{align*}
where $c_2$ depends on the maximal density that is in the support of $e$.

Finally the relative entropy inequality becomes:
\begin{align*} 
&\Psi_{\gamma}(t) + \frac{1}{2\epsilon^2} \int \iint_{\stt} \rho \left| \frac{m}{\rho} - \frac{\bar{m}}{\bar{\rho}} \right|^2dxd\tau + \frac{\nu}{\epsilon}\iint_{\stt} \mu_L(\rho)|D(u)-D(\bar u)|^2 dxd\tau \nonumber \\
& + \frac{\nu}{2\epsilon}\iint_{\stt}\lambda_L(\rho)[\dive u - \dive \bar u]^2 dxd\tau \leq \Psi_{\gamma}(0) + \tilde{C}\epsilon^4 + \hat{C} \nu \epsilon + C \int_0^t \Psi_{\gamma}(\tau)d\tau,
\end{align*}
and Gronwall's Lemma concludes the proof.
\end{proof}

\section{Weak-strong uniqueness for Navier-Stokes-Korteweg system with non monotone pressure}\label{sec:west}

In this section, we prove weak-strong uniqueness for NSK under minimal assumptions on $s$ and $\gamma$. It turns out that this can be done, in $n \leq 3$ space dimensions, under significantly weaker assumptions than the proof of convergence to the large friction limit. In the following we always consider $n \leq 3$:
The reason is that in weak-strong uniqueness we deal with one weak solution at a time so we can use the  bound for the $L^2(0,T;H^2(\mathbb{T}^n))$-norm of $\rho^{\frac{s+2}{2}}$. In contrast, in the large friction limit, we would need uniform bounds for this norm for all elements of the sequence which are not available. 
Indeed,
using relative entropy tools and Definition \ref{deFNS}, we show
weak-strong uniqueness for
\begin{equation}\label{nskb}
	\left\{\begin{aligned}
		& \partial_t \rho + \dive (\rho u) =0\\
		&  \partial_t( m ) + \dive\left( \frac{m \otimes m}{\rho}\right) + \nabla p( \rho)   = {2 \nu} \dive(\mu_L(\rho)Du) + \nu \nabla(\lambda_L(\rho) \dive u) + \dive S_1 \\
		& \partial_t (J) + \dive\left( \frac{m \otimes J}{\rho}\right) + \dive S_2=0,
		\end{aligned}\right.
\end{equation}
for $n\leq 3$, $\gamma > \frac{s+3}{2}$, $\lambda_L(\rho)=\lambda(\rho), \mu_L(\rho)=\mu(\rho)$ . 


By using Lemma \ref{lemmaJAN} and following the structure of Theorem \ref{theo:stabNSK} we prove 
\begin{theorem}
Let $T>0$ be fixed, let $(\rho,m, J)$ be a finite energy weak solution of \eqref{nskb} in the sense of Definition \ref{deFNS} such that $\rho>c_p \geq0$.
Let $(\bar{\rho}, \bar m, \bar J)$ be a smooth solution of \eqref{nskb} such that $\bar{\rho} \geq \delta > 0$. 
Assume the pressure $p(\rho)$ is given by \eqref{eqp} with $\gamma > 1$. Assume $\mu(\rho) = \rho^{\frac{s+3}{2}}= \mu_L(\rho)$ and $\lambda(\rho)=\lambda_L(\rho)$ satisfying \eqref{BDREL} with $\gamma \geq \frac{s+3}{2}$ and $s \geq -1$. 
Then, if $(\rho,m, J)$ and $(\bar{\rho}, \bar m, \bar J)$ coincide at $t=0$ then they coincide on all of $[0,T]$.
 \end{theorem}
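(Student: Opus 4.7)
The plan is to run the relative entropy machinery of Theorem \ref{relativeentropyNS} on the unscaled system \eqref{nskb}. Setting $\epsilon = 1$ throughout the computation leading to \eqref{formalcomp}, dropping the friction contribution, and observing that the strong solution satisfies \eqref{nskb} \emph{exactly} (so no perturbation error $\bar e$ appears), we obtain an inequality of the form
\begin{align*}
&\int_{\mathbb{T}^n}\eta(\rho,m,J\,|\,\bar\rho,\bar m,\bar J)(t)\,dx + 2\nu\iint_{\stt}\mu(\rho)|D(u-\bar u)|^2\,dxd\tau \\
&\qquad + \nu\iint_{\stt}\lambda(\rho)|\dive(u-\bar u)|^2\,dxd\tau \leq \iint_{\stt}(|Q_1|+|Q_2|+|Q_3|)\,dxd\tau,
\end{align*}
where the initial contribution vanishes by hypothesis and $Q_1,Q_2,Q_3$ are the unscaled analogues of the convective/pressure, capillarity-cross, and viscous-cross terms occurring in the proof of Theorem \ref{theo:stabNSK}. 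Decomposing $\eta = \Psi_\gamma + h_e(\rho|\bar\rho)$ and applying Lemma \ref{lemmaJAN} to rewrite $\int h_e(\rho|\bar\rho)(t)\,dx$ as two spatial integrals $J_1+J_2$, the task reduces to bounding $|Q_1|+|Q_2|+|Q_3|+|J_1|+|J_2|$ by $\Psi_\gamma$ modulo contributions absorbable in the viscous dissipation.

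Most pieces are treated exactly as in Theorem \ref{theo:stabNSK}. The pressure enters through \eqref{pcontrol}: the convex part satisfies $p_\gamma(\rho|\bar\rho)\leq C h_\gamma(\rho|\bar\rho)$, while the bounded derivatives of $p_e$ together with $\bar\rho\geq\delta$ and the compact support of $e$ give $|p_e(\rho|\bar\rho)|\leq C|\rho-\bar\rho|^2\leq C h_\gamma(\rho|\bar\rho)$. The convective terms $\rho\nabla\bar u:(u-\bar u)\otimes(u-\bar u)$ and its $v$-analogue are immediate since $\nabla\bar u\in L^\infty$. For $J_1$ we use Taylor expansion and the boundedness of $e'''$; for $J_2$ the same four-piece decomposition as in the proof of Theorem \ref{theo:stabNSK}, based on the identity $\nabla\mu(\rho) = \mu'(\rho)\nabla\rho = \rho v$ (valid since $\rho>c_p>0$), combined with Young's inequality, yields a bound by $\int_0^t\Psi_\gamma(\tau)d\tau$ plus a small multiple of the kinetic dissipation. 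The term $Q_3$ is absorbed into the viscous dissipation via Young's inequality, leaving an integral of $\mu(\bar\rho)|D\bar u|^2+\lambda(\bar\rho)|\dive\bar u|^2$ which is finite because $\bar\rho$ is smooth.

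The main obstacle is $Q_2$, the capillarity cross term containing $\rho(\mu''(\rho)\nabla\rho-\mu''(\bar\rho)\nabla\bar\rho)$ and $\rho(\mu'(\rho)-\mu'(\bar\rho))$. In Theorem \ref{theo:stabNSK} these were dominated using Lemma \ref{lemma8}, which requires $\gamma\geq s+2$, whereas here we only have $\gamma\geq\tfrac{s+3}{2}$, so Lemma \ref{lemma8} is unavailable. The essential gain is that under the BD structural choice $\mu_L=\mu$, $\lambda_L=\lambda$ with $\lambda$ given by \eqref{BDREL}, the single finite energy weak solution at hand inherits the additional regularity $\rho^{(s+2)/2}\in L^2(0,T;H^2(\mathbb{T}^n))$ (the bound flagged at the opening of this section), which in large friction limits is unavailable uniformly in $\epsilon$ but is available here. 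In $n\leq 3$, Sobolev embedding converts this into quantitative $L^2_tL^p_x$-type control of $\nabla\rho^{(s+2)/2}$, hence of $\mu''(\rho)\nabla\rho$; combined with the $L^\infty_tL^\gamma_x$ bound on $\rho$ and H\"older's inequality against $\sqrt{\rho}(u-\bar u)$ and $\sqrt{\rho}(v-\bar v)$, and using that $\dive\bar u,\,\dive\bar v,\,\nabla\dive\bar u,\,\nabla\dive\bar v$ are all $L^\infty$, one obtains
\begin{equation*}
\iint_{\stt}|Q_2|\,dxd\tau \leq \int_0^t C(\tau)\,\Psi_\gamma(\tau)\,d\tau + \text{terms absorbed by the viscous dissipation},
\end{equation*}
with $C(\cdot)\in L^1(0,T)$. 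This is precisely the step where both the threshold $\gamma\geq\tfrac{s+3}{2}$ and the dimensional restriction $n\leq 3$ are needed, and it is the real content of the theorem.

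Assembling everything produces a Gronwall inequality $\Psi_\gamma(t)\leq\int_0^t C(\tau)\Psi_\gamma(\tau)\,d\tau$ with $\Psi_\gamma(0)=0$, so $\Psi_\gamma\equiv 0$ on $[0,T]$. Since $\bar\rho\geq\delta$ and $\rho\geq c_p>0$, this forces $\rho=\bar\rho$ and $u=\bar u$, and the identity $J=\nabla\mu(\rho)=\nabla\mu(\bar\rho)=\bar J$ completes the proof.
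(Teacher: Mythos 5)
Your overall architecture (relative entropy for the unscaled system, splitting $\eta=\Psi_\gamma+h_e(\rho|\bar\rho)$ and invoking Lemma \ref{lemmaJAN}, Gronwall) is the paper's, and you correctly identify the new ingredient: the bound on $\rho^{\frac{s+2}{2}}$ in $L^2(0,T;H^2(\mathbb{T}^n))$ available for a \emph{single} weak solution in $n\le 3$. But there is a genuine gap in your treatment of the viscous cross terms. You take $Q_3$ to be the unscaled analogue of the cross term from Theorem \ref{theo:stabNSK}, apply Young's inequality, and accept a leftover $\iint_{\stt}\bigl(\mu(\bar\rho)|D\bar u|^2+\lambda(\bar\rho)|\dive\bar u|^2\bigr)\,dx\,d\tau$ on the grounds that it is finite. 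Finite is not enough for uniqueness: a non-vanishing additive term on the right of the Gronwall inequality yields $\Psi_\gamma(t)\le C'e^{Ct}$, not $\Psi_\gamma\equiv 0$. (In the relaxation proof that leftover is acceptable only because it carries a factor $\nu\epsilon$.) Here \emph{both} solutions solve the viscous system, so the correct cross terms are the paper's $I_2$--$I_5$, which involve the differences $\rho\bigl(\tfrac{\mu(\rho)}{\rho}-\tfrac{\mu(\bar\rho)}{\bar\rho}\bigr)$, $\rho\bigl(\tfrac{\lambda(\rho)}{\rho}-\tfrac{\lambda(\bar\rho)}{\bar\rho}\bigr)$ and $\rho\bigl(\tfrac{\nabla\rho}{\rho}-\tfrac{\nabla\bar\rho}{\bar\rho}\bigr)$; these vanish on the diagonal and are estimated by $h_\gamma(\rho|\bar\rho)$, $\rho|u-\bar u|^2$ and $\rho|v-\bar v|^2$. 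This is precisely where the hypotheses $\mu_L=\mu$, $\lambda_L=\lambda$ with \eqref{BDREL} enter (they reduce the coefficient differences to $\mu'(\rho)-\mu'(\bar\rho)$ up to constants and let $\nabla\rho/\rho$ be rewritten through $v$); you state these hypotheses but never use them.

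Second, the central estimate---the capillarity cross terms under the weakened condition $\gamma\ge\frac{s+3}{2}$, where Lemma \ref{lemma8} is unavailable---is asserted rather than carried out. You appeal to Sobolev embedding plus H\"older against $\sqrt{\rho}(u-\bar u)$, $\sqrt{\rho}(v-\bar v)$ and claim an $L^1$-in-time Gronwall coefficient, but the mechanism is not exhibited and it is not clear the exponents close. The paper's argument is more specific: (i) $H^2\hookrightarrow C^{0,1/2}$ in $n\le 3$ gives $\rho$ bounded in space; (ii) with $\rho\in L^\infty_x$ one has the pointwise substitute for Lemma \ref{lemma8},
\begin{equation*}
\rho|\mu'(\rho)-\mu'(\bar\rho)|^2\le C\bigl(\|\rho\|_{L^\infty}\bigr)\,\bigl|\rho^{\gamma/2}+\bar\rho^{\gamma/2}\bigr|^2\bigl|\rho^{\gamma/2}-\bar\rho^{\gamma/2}\bigr|^2\le C\,h_\gamma(\rho|\bar\rho),
\end{equation*}
the last step being Remark \ref{rem_gamma}, and this is exactly where $s\le 2\gamma-3$ is used; (iii) the identity $\mu''(\rho)\nabla\rho=\tfrac{s+1}{2}\,v$, which converts the term containing $\mu''(\rho)\nabla\rho-\mu''(\bar\rho)\nabla\bar\rho$ directly into a quadratic expression in $v-\bar v$ and $u-\bar u$, with no need for any integrability of $\nabla\rho^{(s+2)/2}$ beyond what the energy already gives. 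Your sketch uses neither (ii) nor (iii), so the step that is ``the real content of the theorem'' in your own words is not actually proved. The remaining pieces (pressure via \eqref{pcontrol}, convective terms, the four-way splitting of the $e''$ term) do match the paper.
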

\begin{proof}
Since we are not performing the high friction limit with $\xi=1/\epsilon$ with the rescaled variable $\epsilon \rightarrow \epsilon t$, as was done in Theorem \ref{relativeentropyNS} and Theorem \ref{theo:stabNSK}, we also have to take care of the dissipative parts. Indeed they are present in the system \eqref{nskb} contrary to \eqref{ekb-scaledstrong}. We follow the strategy of Theorem 5.2 in \cite{CCL} and we obtain:
\begin{align}\label{pt3}
& \int_{\mathbb{T}^n} \eta(\rho,m,J| \bar{\rho}, \bar{m}, \bar{J})(t)dx  \leq  \int_{\mathbb{T}^n} \eta(\rho,m,J| \bar{\rho}, \bar{m}, \bar{J})(0)dx  \nonumber \\
& -\iint_{\stt} \rho \nabla \bar{u}: (u-\bar{u}) \otimes (u-\bar{u})dxdt =: J_1 \nonumber \\
& - \iint_{\stt}p(\rho|\bar{\rho}) \dive \bar{u} dxd\tau - \iint_{\stt} \rho \; \nabla \bar{u}: (v-\bar{v}) \otimes (v-\bar{v}) dxd\tau =: J_2\nonumber \\ 
& - \iint_{\stt}\rho[(\mu''(\rho)\nabla \rho - \mu''(\bar{\rho})\nabla \bar{\rho})\cdot((v-\bar{v})\dive \bar{u} - (u-\bar{u})\dive \bar{v})]dxd\tau =: J_3 \nonumber\\ 
& -\iint_{\stt} \rho (\mu'(\rho)- \mu'(\bar{\rho}))[(v-\bar{v})\cdot\nabla \dive \bar{u}-(u- \bar{u})\cdot\nabla \dive \bar{v}]dxd\tau =: J_4 \nonumber \\
& - 2 \nu \iint_{\stt} \mu(\rho)|D(u-\bar u)|^2dxdt - \nu \iint_{\stt} \lambda(\rho)|\dive(u-\bar u)|^2dxdt =: I_1 \nonumber \\
& -2 \nu \iint_{\stt} \rho \left( \frac{\mu(\rho) }{\rho} - \frac{\mu(\bar \rho) }{\bar \rho}\right) D \bar u : D(u-\bar u) dxdt =: I_2 \nonumber \\
& - \nu \iint_{\stt} \rho \left( \frac{\lambda(\rho) }{\rho} - \frac{\lambda(\bar \rho) }{\bar \rho}\right) \dive \bar u \dive(u-\bar u) dxdt =: I_3  \nonumber \\
& 2 \nu \iint_{\stt} \frac{\mu(\bar \rho)}{\bar \rho}\rho\left( \frac{\nabla \rho }{\rho} - \frac{\nabla \bar \rho }{\bar \rho}\right) D \bar u (u-\bar u) dxdt=: I_4 \nonumber \\
& \nu \iint_{\stt} \frac{\lambda(\bar \rho)}{\bar \rho}\rho\left( \frac{\nabla \rho }{\rho} - \frac{\nabla \bar \rho }{\bar \rho}\right) \dive \bar u (u-\bar u) dxdt=: I_5 \nonumber \\
\end{align}
Following the same computations as in Theorem \ref{theo:stabNSK} and Lemma \ref{lemmaJAN} for the pressure term, we get
\begin{align*} 
&J_1 +J_2 \leq  C \int_0^t \Psi_{\gamma}(\tau)d\tau .
\end{align*}
Before providing estimates for $J_3, J_4$ we recall that, since $n\leq 3$ and $\gamma > \frac{s+3}{2}$:
Definition \ref{deFNS} implies $\rho^{\frac{s+2}{2}} \in L^2(0,T; H^2(\mathbb T^n))$ and therefore $\rho^{\frac{s+2}{2}} \in L^2(0,T; C^{0,1/2}(\mathbb T^n))$. In particular $\rho$  is bounded in space, $\rho \in L^2(0,T; C^{0,1/2}(\mathbb T^n))$.
 Another consequence of Definition \ref{deFNS} is continuity in time of $\rho$, i.e. $\rho \in C((0,T);L^1(\mathbb T^n))$.

The following remark will help us in the estimate of the term $J_4$.
\begin{remark}\label{rem_gamma}
Since $\bar \rho$ is bounded from above and below, we have
    \begin{align*}
    \iint_{\stt}|\rho^{\gamma/2}-\bar \rho^{\gamma/2}|^2dxds 
        \leq C_{\bar \rho, \gamma}\iint_{\stt} h_{\gamma}(\rho|\bar \rho) dxds.
    \end{align*}
\end{remark}
\begin{proof}
Let us define  $F=\{\rho >0 \, : \,  \rho \leq \frac{\bar \rho}{2} \text{ or } \rho \geq 2\bar \rho\}$.
 We handle the cases $\rho \in F^c$ and $\rho \in F$ separately.  Let $\rho \in F^c$ then:
    \begin{align*}
        \iint_{\stt}|\rho^{\gamma/2}-\bar \rho^{\gamma/2}|^2dxds & \leq \iint_{\stt}\left|\frac{\gamma}{2}\bar \rho^{\frac{\gamma}{2}-1}(\rho-\bar \rho)\right|^2 dxds \leq  C_{\gamma, \bar \rho} \iint_{\stt} |\rho-\bar \rho|^2 dxds \\
        & \leq C_{\gamma, \bar \rho} \iint_{\stt} h_{\gamma}(\rho|\bar \rho) dxds,
    \end{align*}
    where the last inequality comes from Lemma 1 in \cite{GLT}. Let $\rho \in F$ then:
    \begin{align*}
        \iint_{\stt}|\rho^{\gamma/2}-\bar \rho^{\gamma/2}|^2dxds & \leq \iint_{\stt} \left| \rho-\bar \rho\right|^{\frac{\gamma}{2} \cdot 2} dxds \leq  C_{\gamma, \bar \rho} \iint_{\stt} |\rho-\bar \rho|^{\gamma} dxds \\
        & \leq C_{\gamma, \bar \rho} \iint_{\stt} h_{\gamma}(\rho|\bar \rho) dxds
    \end{align*}
where the last inequality comes from Lemma 1 in \cite{GLT}.
\end{proof}

The crucial point in the relative entropy estimate at hand is that by using the regularity of $\rho$ we can relax the assumption on $s$ from $s \leq \gamma -2$  to $s \leq 2\gamma -3 $. 
Indeed:
    \begin{align}\label{regmu}
      J_4 & \leq C(\bar u)  \iint_{\stt} \rho|\mu'(\rho)-\mu'(\bar \rho)|^2 dxd\tau + \max\{C_2, C_3\}  \iint_{\stt} \rho|u-\bar u|^2 
      +\rho|v-\bar v|^2 dx d\tau \nonumber \\
      & \leq C(\bar u)\iint_{\stt} \rho|\rho^{\frac{s+1}{2}}- \bar \rho^{\frac{s+1}{2}}|^2dx d\tau + \max \left[C_2,C_3 \right] \int_0^t \Psi_{\gamma}(\tau) dx d\tau\nonumber \\
      &\leq C(\bar u,||\rho||_{C^0_t C^{0,1/2}_x}) \iint_{\stt} |(\rho^{\frac{\gamma}{2}})^2 - (\bar\rho^{\frac{\gamma}{2}})^2|^2 dx d\tau + \max \left[C_2,C_3 \right] \int_0^t \Psi_{\gamma}(\tau) d\tau \nonumber \\
    & \leq  C(\bar u,||\rho||_{C^0_t C^{0,1/2}_x})\iint_{\stt} |\rho^{\frac{\gamma}{2}} + \bar \rho^{\frac{\gamma}{2}}|^2  |\rho^{\frac{\gamma}{2}}- \bar \rho^{\frac{\gamma}{2}}|^2 dx d\tau 
    + \max \left[C_2,C_3 \right] \int_0^t \Psi_{\gamma}(\tau) d\tau \nonumber\\
    & \leq C(\bar u, \bar \rho, \gamma, ||\rho||_{C^0_t C^{0,1/2}_x}) \underbrace{\iint_{\stt} h_{\gamma}(\rho|\bar\rho)dxd\tau}_{\text{Thanks to Remark} \; \ref{rem_gamma}}  + \max \left[C_2,C_3 \right] \int_0^t \Psi_{\gamma}(\tau) d\tau  \nonumber \\
    & \leq \tilde C \int_0^t \Psi_{\gamma}(\tau) d\tau
    \end{align}
 where $\tilde C= \max \left[C(\bar u, \bar \rho, \gamma, ||\rho||_{C^0_t C^{0,1/2}_x}), \max [C_2,C_3]\right]$ depends on $||\rho||_{C^0_t C^{0,1/2}_x}$, $||\bar \rho||_{C^0_t C^{0,1/2}_x}$, $s$, $\dive \bar u$, $\dive \bar v$, and $\gamma$.
 
Note that $\frac{\rho \mu''(\rho)\nabla \rho}{\rho} = \frac{(s+1)\mu'(\rho) \nabla \rho}{2 \rho}= \frac{s+1}{2}v$ so that we can bound $J_3$ as follows:
\begin{align}
    J_3 \leq C \int_0^t \Psi_{\gamma}(\tau) d\tau
\end{align}
thanks to $\eqref{regmu}$.

Under the assumption that the Lam\'e  coefficients coincide with the Korteweg ones $\lambda_L=\lambda=\frac{s+1}{2}\mu$, $\mu_L= \mu$ we get:
\begin{align*}
     I_2  & = -2 \nu \frac{2}{s+3} \iint_{\stt} \rho (\mu' (\rho) - \mu'(\bar \rho)) \frac{\mu(\rho)}{\mu(\rho)} D\bar u: D(u-\bar u)  dxd\tau \\
    & \leq  C_1\iint_{\stt} \rho |\mu' (\rho) - \mu'(\bar \rho)|^2 dxd\tau + \nu \iint_{\stt} \mu(\rho)| D(u-\bar u)|^2 dx d\tau \\
    & \leq C_1 \iint_{\stt} h_{\gamma}(\rho|\bar \rho) dxd\tau + \nu \iint_{\stt} \mu(\rho)| D(u-\bar u)|^2 dx d\tau
\end{align*}
where $C_1 = C(s, \bar u, c_p)$.
\begin{align*}
     I_3 & = - \nu \frac{s+1}{s+3} \iint_{\stt} \rho (\mu' (\rho) - \mu'(\bar \rho)) \frac{\lambda(\rho)}{\lambda(\rho)} \dive \bar u: \dive (u-\bar u)  dxd\tau \\
    & \leq C_3 \iint_{\stt} \rho |\mu' (\rho) - \mu'(\bar \rho)|^2 dxd\tau + \frac{\nu}{2} \iint_{\stt} \lambda(\rho)| \dive(u-\bar u)|^2 dxd\tau \\
    & \leq C_3 \iint_{\stt} h_{\gamma}(\rho|\bar \rho) dxd\tau + \frac{\nu}{2} \iint_{\stt} \lambda(\rho)| \dive(u-\bar u)|^2 dxd\tau
\end{align*}
where $C_3= C(s, \bar u, c_p)$.
\begin{align*}
     I_4 & = 2 \nu \max\left(\frac{1}{c_p}, \frac{1}{\delta}\right) \iint_{\stt} \frac{\mu(\bar\rho)}{\bar \rho}\rho (\nabla \rho - \nabla\bar \rho) \dive \bar u (u-\bar u) dxd\tau  \\
    & \leq C_5 \iint_{\stt} \rho |u-\bar u|^2 dxd\tau + C_6 \iint_{\stt} \rho|\nabla \rho - \nabla \bar \rho|^2 dxd\tau  \\
    & = C_5 \iint_{\stt} \rho |u-\bar u|^2 dxd\tau + C_6\iint_{\stt} \rho\left|\frac{\rho}{\mu'(\rho)}v - \frac{\bar \rho}{\mu'(\bar \rho)} \bar v \right|^2 dxd\tau  \\
    & \leq C_5 \iint_{\stt} \rho |u-\bar u|^2 dxd\tau + C_6\max\left(\left|\frac{\rho}{\mu (\rho)}\right|^2, \left|\frac{\bar \rho}{\mu(\bar \rho)}\right|^2\right)\iint_{\stt} \rho\left|v -  \bar v \right|^2 dxd\tau \\
    & \leq C_5 \iint_{\stt} \rho |u-\bar u|^2 dxd\tau + C_6 \iint_{\stt} \rho\left|v -  \bar v \right|^2 dxd\tau
\end{align*}
where $C_5,C_6= C\left(\max\left(\left|\frac{\rho}{\mu (\rho)}\right|^2, \left|\frac{\rho}{\mu(\bar \rho)}\right|^2\right), \nu, c_p, \delta\right)$. We finally get:
\begin{align}
    \Psi_{\gamma}(t) & + \nu \iint_{\stt} \mu(\rho)|D(u-\bar u)|^2dxdt + \frac{\nu}{2} \iint_{\stt} \lambda(\rho)|\dive(u-\bar u)|^2dxdt \nonumber \\
    & \leq \Psi_{\gamma}(0) + C \int_0^t \Psi_{\gamma}(\tau)d\tau.
\end{align}
Applying the Gronwall lemma we conclude the proof.

\end{proof}

\section*{Acknowledgements}
J.G. is grateful for financial support by the Deutsche Forschungsgemeinschaft (DFG) project
“Dissipative solutions for the Navier-Stokes-Korteweg system and their numerical
treatment” 525866748 within SPP 2410 Hyperbolic Balance Laws in Fluid Mechanics: Complexity, Scales, Randomness (CoScaRa).


\begin{thebibliography}{99}

\bibitem{AM1}   Antonelli, P. and  Marcati, P.
On the finite energy weak solutions to a system in Quantum Fluid Dynamics, \textit{Comm. Math. Phys.} {\bf 287} (2009), 657-686 

\bibitem{AM2} Antonelli, P. and  Marcati, P.  The Quantum Hydrodynamics system in two space dimensions, \textit{Arch. Ration. Mech. Anal.} {\bf 203} (2012), 499-527  

\bibitem{AS} Antonelli, P. and Spirito, S. Global existence of finite energy weak solutions of Quantum Navier--Stokes equations, \textit{Arch. Ration. Mech. Anal.} {\bf 225} (2017), 1161-1199 

\bibitem{AS2} Antonelli, P. and Spirito, S. 
Global existence of weak solutions to the Navier--Stokes--Korteweg equations, arXiv:1903.02441 (2019)

\bibitem{Bia19}
Bianchini, R. Strong convergence of a vector--BGK model to the incompressible Navier--Stokes equations via the relative entropy method. 
{\it J. Math. Pures Appl. (9)}  {\bf132} (2019), 280–307. 


\bibitem{Bresh}  Bresch, D., Gisclon, M., and Lacroix--Violet, I. On Navier-Stokes-Korteweg and Euler-Korteweg systems: application to quantum fluids models. \textit{ Arch. Ration. Mech. Anal.} {\bf 233} (2019), no. 3, 975-1025.

\bibitem{Carrillo} 
Carrillo, J.A., Peng, Y., and Wr\'{o}blewska-Kami\'{n}ska, A. 
Relative Entropy Method for the relaxation limit of Hydrodynamic Model, arXiv:1910.12237  (2019)

\bibitem{CCL} Cianfarani Carnevale, G; Lattanzio, C.; High friction limit for Euler–Korteweg and Navier–Stokes–Korteweg models via relative entropy approach; \textit{Journal of Differential Equation}, {\bf 269} (2020)

\bibitem{DM04} 
Donatelli, D. and Marcati, P. Convergence of singular limits for multi-D semilinear hyperbolic systems to parabolic systems.
{\it Trans. Amer. Math. Soc.} {\bf 356} (2004), 2093-2121.

\bibitem{FT19}
 Feireisl, E. and Tang, T. 
 On a singular limit for the stratified compressible Euler system. {\it Asymptot. Anal.} {\bf 114} (2019), no.\ 1-2, 59–72. 


\bibitem{GLT} Giesselmann, J.,  Lattanzio, C., and; Tzavaras, A.E. 
Relative energy for the Korteweg theory and related Hamiltonian flows in gas dynamics.
 \textit{ Arch. Ration. Mech. Anal.} {\bf 223} (2017), no. 3, 1427-1484.
 
 
\bibitem{JT2017} Giesselmann, J.; Tzavaras, A.E. 
Stability properties of the Euler–Korteweg system
with nonmonotone pressures \textit{ Applicable analysis } {\bf 96} (2017), no. 9, 1528-1546.

 
 \bibitem{HMP05}
 Huang, F., Marcati, P., and Pan, R. 
 Convergence to the Barenblatt solution for the compressible Euler equations with
damping and vacuum. 
\textit{Arch. Ration. Mech. Anal.} {\bf 176} (2005), 1-24.



 \bibitem{HPW11}
 Huang, F., Pan, R., and Wang, Z. 
 $L^1$ convergence to the Barenblatt solution for compressible Euler equations with damping.
\textit{Arch. Ration. Mech. Anal.} {\bf 200} (2011), 665-689.

\bibitem{HJT19} 
Huo, X., J\"ungel, A., and Tzavaras, A.E. 
High-friction limits of Euler flows for multicomponent systems. {\it Nonlinearity} {\bf32} (2019), no.\ 8, 2875–2913.
 
\bibitem{LT} Lattanzio, C. and  Tzavaras,  A.E. 
Relative entropy in diffusive relaxation. \textit{ SIAM J. Math. Anal.} {\bf 45} (2013), no. 3, 1563-1584. 

\bibitem{LT2} Lattanzio, C. and  Tzavaras,  A.E.
From gas dynamics with large friction to gradient flows describing diffusion theories.
 \textit{Comm. Partial Differential Equations} {\bf 42} (2017), no. 2, 261-290.

\bibitem{MM90}
Marcati, P. and Milani, A.J.
The one--dimensional Darcy?s law as the limit of a compressible Euler flow. 
{\it J. Differential Equations} {\bf 84} (1990), no. 1, 129-147.

\bibitem{OR20}
Ostrowski, L. and Rohde, C.
Compressible multicomponent flow in porous media with Maxwell‐-Stefan diffusion.
{\it Math. Methods Appl. Sci.} {\bf 43}, no.\ 7, 4200-4221





\bibliographystyle{plain}
\end{thebibliography}
\end{document}